\documentclass[preprint,11pt]{elsarticle}

\usepackage[left=2cm,right=2.5cm]{geometry}

\usepackage{graphicx}
\usepackage{amssymb}
\usepackage{amsmath}
\usepackage{amsthm}
\usepackage{latexsym}
\usepackage{eufrak}
\usepackage{euscript}
\usepackage{exscale}
\usepackage{graphicx}
\usepackage{mathrsfs}
\usepackage{lineno}
\usepackage{hyperref}
\usepackage[table,xcdraw]{xcolor}
\usepackage{mathabx}

\newtheorem{theorem}{Theorem}[section]
\newtheorem{lemma}[theorem]{Lemma}
\newtheorem{cor}[theorem]{Corollary}

\theoremstyle{definition}
\newtheorem{definition}[theorem]{Definition}
\newtheorem{example}{Example}
\newtheorem{remark}[theorem]{Remark}

\newcommand{\kc}{\mathcal{C}}
\newcommand{\pc}{\mathcal{P}}

\journal{}

\begin{document}

\begin{frontmatter}



\title{\LARGE{On $d$-stochastic measures with fractal support and uniform $(d-1)$-marginals, and 
related results}}

\author[label1]{Nicolas Pascal Dietrich}
\ead{nicolaspascal.dietrich@plus.ac.at}
\author[label2]{Juan Fern\'andez S\'anchez}
\ead{juanfernandez@ual.es}
\author[label1]{Wolfgang Trutschnig\corref{cor1}}
\ead{wolfgang@trutschnig.net}
\affiliation[label1]{organization={University of Salzburg, Department for Artificial Intelligence and Human Interfaces},
            addressline={Hellbrunner Straße 34},
            city={Salzburg},
            postcode={5020},
            state={Salzburg},
            country={Austria}}
\affiliation[label2]{organization={Universidad de Almer\'a, 
                    Grupo de investigaci\'on de An\'alisis Matem\'atico},
                    addressline={La Canada de San Urbano},
                    country={Spain}}
	       
\cortext[cor1]{Corresponding Author}

\begin{abstract}
The family $\mathcal{P}_{d}^{\lambda_{d-1}}$ of all probability measures on 
$[0,1]^d$ whose $(d-1)$-dimensional marginals are all equal to the Lebesgue 
measure $\lambda_{d-1}$ on $[0,1]^{d-1}$ contains remarkably pathological elements:   
Working with Iterated Function Systems with Probabi\-lities (IFSPs) we 
construct measures $\mu \in \mathcal{P}_{d}^{\lambda_{d-1}}$ of the following two types: 
(i) $\mu$ has self-similar fractal support; (ii) $\mu$ has self-similar support and models the situation of complete/functional dependence in each direction.
As our main results concerning type (i) we prove, firstly, that for every 
$d\geq 3$ the 
set $\mathcal{D}_d$ of Hausdorff dimensions of the supports of elements in 
$\mathcal{P}_{d}^{\lambda_{d-1}}$ 
is dense in $[d-1,d]$; and, secondly, that the subset of elements in 
$\mathcal{P}_{d}^{\lambda_{d-1}}$  having fractal support is dense in 
$\mathcal{P}_{d}^{\lambda_{d-1}}$  with respect to the Wasserstein metric. 
Moreover, we show the existence of an element in $\mathcal{P}_{3}^{\lambda_{2}}$  of type (ii) 
whose support is a Sierpinski tetrahedron and study some generalizations. 
\end{abstract}




\end{frontmatter}

\section{Introduction}
A probability measure $\mu$ on the unit cube $[0,1]^d$ is called $d$-stochastic, if
all univariate marginal distributions of $\mu$ coincide with the uniform distribution 
on $[0,1]$. Considering that each $d$-stochastic measure $\mu$ corresponds to a unique
$d$-dimensional copula $A_\mu$ (and vice versa, see, e.g., \cite{DS}) and 
that copulas are Lipschitz continuous, it might seem natural to assume that $d$-stochastic
measures distribute their mass in a fairly regular way over $[0,1]^d$. 
About 20 years ago, in \cite{FNRL} Fredricks, Nelsen and Rodr\'iguez-Lallena (FNR) falsified this interpretation by proving 
the existence of a doubly stochastic measure $\mu_s$ whose support has Hausdorff dimension 
$s \in (1,2)$ for every $s$. Remarkably pathological, but mathematically interesting elements of the 
convex set $\mathcal{P}_d$ of $d$-stochastic measures, however, were already studied  
in the second half of the past century: In 1965, Lindestrauss \cite{Li} 
proved a conjecture by Phelps saying that extreme points of $\mathcal{P}_2$ are necessarily 
singular with respect to the Lebesgue measure $\lambda_2$, Losert \cite{Lo} and Feldman \cite{Fe} 
constructed an extreme point of $\mathcal{P}_2$ with full support. A full characterization of 
extreme points of $\mathcal{P}_2$, however, is still unknown, underlining 
the fact that $\mathcal{P}_2$ is far more complex than its discrete counterpart, the family of 
doubly stochastic $N \times N$ matrices. 

In the current paper we revisit the FNR result and study its strict extension to the family 
$\mathcal{Z}_d=\mathcal{P}_d^{\lambda_{d-1}}$ of $d$-stochastic measures 
whose $(d-1)$-dimensional marginals are all equal to the Lebesgue 
measure $\lambda_{d-1}$ on $[0,1]^{d-1}$ (the weak extension to $\mathcal{P}_d$ was established in \cite{TFS}). As our main contributions 
we show that the set $\mathcal{D}_d$ of Hausdorff dimensions of supports of elements in  $\mathcal{P}_d^{\lambda_{d-1}}$ is dense in $[d-1,d]$ and that elements with fractal support
are even dense in the metric space $(\mathcal{P}_d^{\lambda_{d-1}},h)$ with $h$ denoting the 
Wasserstein metric. Moreover, as a surprising by-product we prove the existence of measures 
$\mu \in \mathcal{P}_d^{\lambda_{d-1}}$ with self-similar support, which describe 
the situation of complete dependence in each direction, i.e., 
for a random vector $\mathbf{X}=(X_1,\ldots,X_d)$ having distribution $\mu$, 
each variable $X_j$ is a function of the remaining $(d-1)$ variables almost surely. 
As a special case, our construction produces a $3$-stochastic measure of the afore-mentioned 
type whose support is a Sierpinski tetrahedron (see Figure \ref{fig1} for an approximation
and \cite{Ts} for additional properties of the Sierpinski tetrahedron). 

\begin{figure}[h!]
    \centering
    \includegraphics[width=0.9\textwidth]{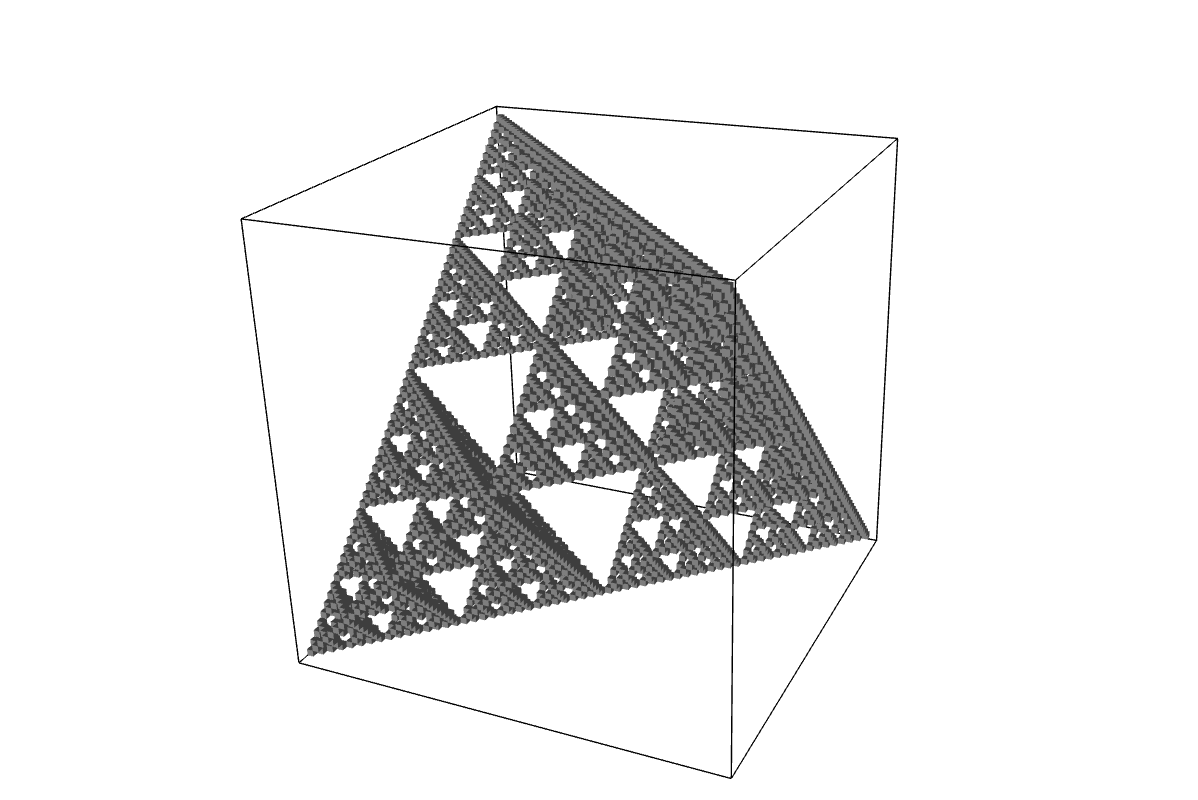}
    \caption{Approximation of the support of the $3$-stochastic measure $\mu$ 
    considered in Example \ref{ex:sierpinski}; in this case the support of $\mu$ is 
    a Sierpinski tetrahedron.}
    \label{fig1}
\end{figure}

\section{Notation and preliminaries}
Throughout the paper $d\geq 2$ will denote the dimension and 
bold symbols will denote vectors. For every fixed 
index/coordinate $i_0 \in \{1,\ldots,d\}$ and every vector $\mathbf{x} \in \mathbb{R}^d$ we will write
$\mathbf{x}_{-i_0}=(x_1,\ldots,x_{i_0-1},x_{i_0+1},\ldots,x_d)$. 
Moreover, for every vector $\mathbf{j}=(j_1,\ldots,j_l)$ of at most $l \leq d-1$ pairwise 
disjoint indices in $\{1,\ldots,d\}$ we will let $\pi_\mathbf{j}: [0,1]^d \rightarrow [0,1]^l$ 
denote the projection onto the coordinates in $\mathbf{j}$, i.e., 
$\pi_{\mathbf{j}}(x_1,\ldots,x_d)=(x_{j_1},\ldots,x_{j_l})$. 
To simplify notation we will also write $\pi_{1:l}$ for the projection onto the first $l$ coordinates
as well as $\pi_{-j_0}$ for the projection onto the coordinates $(1,\ldots,j_0-1,j_0+1,\ldots,d)$. 

For every metric space $(\Omega,\rho)$ we will let $\mathcal{K}(\Omega)$ denote the family of all 
non-empty compact subsets of $\Omega$, $\delta_H$ the Hausdorff metric on $\mathcal{K}(\Omega)$, and 
$\mathcal{B}(\Omega)$ the Borel $\sigma$-field. 
$\mathcal{P}(\Omega)$ denotes the family of all probability measures on 
$(\Omega,\mathcal{B}(\Omega))$. The Lebesgue measure on $[0,1]^d$ will be denoted by $\lambda_d$. \\ 
As mentioned in the introduction, a probability measure $\mu \in \mathcal{P}([0,1]^d)$ is called $d$-stochastic if the 
push-forward $\mu^{\pi_j}$ of $\mu$ via the projection $\pi_j: [0,1]^d \rightarrow [0,1]$ 
fulfills $\mu^{\pi_j}=\lambda_1$ for every $j \in \{1,\ldots,d\}$. 
Every $d$-stochastic measure $\mu$ corresponds to a unique $d$-dimensional copula 
$A_\mu$ and vice versa; the correspondence is established via 
$$
\mu\left(\bigtimes_{j=1}^d [0,x_j] \right) = A_\mu(\mathbf{x}), \quad 
\mathbf{x}=(x_1,\ldots,x_d) \in [0,1]^d.
$$
$\kc_d$ will denote the class of all $d$-dimensional copulas, $\mathcal{P}_{d}$ the family 
of all $d$-stochastic measures. For every $A \in \kc_d$
the corresponding $d$-stochastic measure will be denoted by $\mu_A$; for the product copula
$\Pi_d \in \kc_d$ we obviously have $\lambda_d=\mu_{\Pi_d}$. \\ 
For a random vector $\mathbf{X}=(X_1,\ldots,X_d)$ on a probability space 
$(\Omega,\mathcal{A},\mathbb{P})$ and $\mu \in \mathcal{P}_d$ we will write 
$\mathbf{X}\sim \mu$ if $\mu$ is the distribution of $\mathbf{X}$, i.e., if  $\mathbb{P}^\mathbf{X}=\mu$ holds. Notice that for 
$(X_1,\ldots,X_d)\sim \mu \in \mathcal{P}_d$ we have that each $X_i$ is uniformly 
distributed on $[0,1]$.   
For every $\mu \in \mathcal{P}_d$ and every vector 
$\mathbf{j}=(j_1,\ldots,j_l)$ of at most $l \leq d-1$ 
pairwise disjoint indices in $\{1,\ldots,d\}$ we will let $A^\mathbf{j}$ denote the marginal 
copula correspon\-ding to $\mathbf{j}$, i.e., the copula corresponding to the push-forward 
$\mu_{A^\mathbf{j}}=\mu_A^{\pi_{\mathbf{j}}}$ of $\mu_A$ via $\pi_{\mathbf{j}}$; 
$A^{1:l}$ denotes the marginal copula of the first $l$ coordinates and $A^{-j_0}$ the 
marginal copula corresponding to $\mu^{\pi_{-j_0}}$.   
For further properties of $d$-stochastic measures and copulas we refer to \cite{DS, Ne}. \\

We refer to a map 
$K: \mathbb{R}^{d-1}\times\mathcal{B}(\mathbb{R}) \rightarrow [0,1]$ as
$(d-1)$-\emph{Markov kernel} from 
$\mathbb{R}^{d-1}$ to $\mathbb{R}$, if the function 
$\mathbf{x}\mapsto K(\mathbf{x},E)$ is 
$\mathcal{B}(\mathbb{R}^{d-1})$-$\mathcal{B}(\mathbb{R})$-measurable for every fixed 
$E\in\mathcal{B}(\mathbb{R})$ and the map $E\mapsto K(\mathbf{x},E)$ is a probability measure 
on $\mathcal{B}(\mathbb{R})$ for every $\mathbf{x}\in\mathbb{R}^{d-1}$. 
Given a random variable $Y$ and a $(d-1)$-dimensional random vector 
$\mathbf{X}$ on a joint probability space $(\Omega,\mathcal{A},\mathbb{P})$, a Markov kernel 
$K$ is called a regular conditional distribution of
$Y$ given $\mathbf{X}$, if (and only if) for every set 
$E \in \mathcal{B}(\mathbb{R})$ the identity 
$$
K(\mathbf{X}(\omega), E) = \mathbb{E}(\mathbf{1}_E \circ Y | \mathbf{X})(\omega)
$$
holds for $\mathbb{P}$-almost every $\omega \in \Omega$.
It is a well-known fact (see \cite{Kallenberg}) that for each pair $(\mathbf{X}, Y)$ 
such a regular conditional distribution $K$ of $Y$ given $\mathbf{X}$ exists and that it is 
unique for $\mathbb{P}^{\mathbf{X}}$-almost every $\mathbf{x} \in \mathbb{R}^{d-1}$.\\
For $(\mathbf{X}, Y) \sim \mu$ we will let 
$K_{\mu}:[0,1]^{d-1} \times \mathcal{B}([0,1]) \to [0,1]$ denote (a version of) the corresponding 
conditional distribution of $Y$ given $\mathbf{X}$; $K_\mu$ will simply be referred to as 
(a version of) the $(d-1)$-\emph{Markov kernel} of the $d$-stochastic measure $\mu$ (or of the 
copula $A_\mu$). 
For every $G \in \mathcal{B}([0,1]^d)$ and $\mathbf{x} \in [0,1]^{d-1}$ 
define the $\mathbf{x}$-section $G_\mathbf{x}$ of $G$ by $G_{\mathbf{x}}:=\{y\in [0,1]: 
(\mathbf{x},y) \in G\}\in\mathcal{B}([0,1])$. 
Applying \emph{disintegration} of $\mu \in \mathcal{P}_d$ into the marginal 
$\mu^{\pi_{1:d-1}} $ and the $(d-1)$-Markov kernel $K_\mu$ of $A$
(see \cite[Section 5]{Kallenberg}) the following identity 
holds for all $G \in \mathcal{B}([0,1]^d)$:
\begin{align}\label{eq:DI}
	\mu(G) = \int_{[0,1]^{d-1}} K_{\mu}(\mathbf{x},G_{\mathbf{x}})
	\, \mathrm{d}\mu^{\pi_{1:d-1}}(\mathbf{x}).
\end{align}
Following \cite{GJT} we will call a measure $\mu \in \mathcal{P}_d$ or the corresponding 
copula $A \in \kc_d$ \emph{completely dependent} (on the first
$(d-1)$ coordinates), if
there exists some $\mu^{\pi_{1:d-1}}$-$\lambda$ preserving transformation $g$ (i.e., a measurable
transformation fulfilling that the push-forward 
$(\mu^{\pi_{1:d-1}})^g$ of $\mu^{\pi_{1:d-1}}$ via $g$ 
coincides with $\lambda$) such that
$K(\mathbf{x},F):=\mathbf{1}_F(g(\mathbf{x}))$ is a regular conditional distribution of $\mu$. 
For $(\mathbf{X},Y)\sim \mu \in \mathcal{P}_d$ we obviously have that complete dependence of 
$\mu$ is equivalent to the existence of some $\mu^{\pi_{1:d-1}}$-$\lambda$ preserving transformation $g$ such that $Y=g(\mathbf{X})$ almost surely. \\  

Finally we recall the definition of an Iterated Function System (IFS) and some main results about IFSs (for more details see \cite{Bar,Ed}). 
In what follows we assume that $(\Omega,\rho)$ is a compact metric space. 
We call a mapping $f: \Omega \rightarrow \Omega$ a \emph{contraction} if there 
exists some constant $L<1$ such that $\rho(f(x),f(y)) \leq L \rho(x,y)$ holds for all 
$x,y \in \Omega$. We will refer to a mapping $f: \Omega \rightarrow \Omega$ as
\emph{similarity}, if there exists some constant $c>0$ such that 
$\rho(f(x),f(y)) =c \rho(x,y)$ for all $x,y \in \Omega$.
A family $(f_l)_{l=1}^m$ of $m\geq 2$ contractions on $\Omega$ is called 
\emph{Iterated Function System} (IFS for short) and will be denoted by $\{\Omega, (f_l)_{l=1}^m\}$. 
An IFS together with a vector $(p_l)_{l=1}^m \in (0,1]^m$ fulfilling $\sum_{l=1}^m p_l=1$ 
is called \emph{Iterated Function System with probabilities} (IFSP for short). 
We will denote IFSPs by $\{\Omega,(f_l)_{l=1}^m,(p_l)_{l=1}^m \}$.  
Every IFSP induces the so-called \emph{Hutchinson operator} 
$\mathcal{H}: \mathcal{K}(\Omega) \rightarrow  \mathcal{K}(\Omega)$, defined by  
\begin{equation}\label{Hutch}
\mathcal{H}(Z):=\bigcup_{i=1}^m \,f_i(Z).
\end{equation}
It can be shown (see \cite{Bar}) that $\mathcal{H}$ is a contraction on the 
compact metric space $(\mathcal{K}(\Omega),\delta_H)$, so 
Banach's Fixed Point theorem implies the existence of a unique, globally attractive fixed point $Z^\star$ of $\mathcal{H}$, 
i.e., for every $R \in \mathcal{K}(\Omega)$ we have 
$$
\lim_{n \rightarrow \infty} \delta_H\big(\mathcal{H}^n(R),Z^\star\big)=0.
$$
If the contractions of the IFSP are similarities, then the fixed point 
$Z^\star$ is self-similar, i.e., $Z^\star$ is the union of shrunk copies of itself. 
Moreover, in the special case of $\Omega=\mathbb{R}^d$, if the IFSP only contains 
similarities and fulfills the so-called open set condition (see \cite{Fal}), then 
the Hausdorff dimension of $Z^\star$ fulfills $\textrm{dim}_H(Z^\star)=s$ where $s$ satisfies 
\begin{eqnarray}\label{eq:calc_hausdorffdim}
    \sum_{i=1}^m c_i^s=1
\end{eqnarray}
and $c_i$ denotes the shrinking factor of similarity $f_i$.  
On the other hand, every IFSP $\{\Omega,(f_l)_{l=1}^m,(p_l)_{l=1}^m \}$ also induces a so-called \emph{Markov operator} 
$\mathcal{V}:\mathcal{P}(\Omega) \rightarrow \mathcal{P}(\Omega)$, defined by 
($\mu^{f_i}$ denoting the push-forward of $\mu$ via $f_i$)
\begin{equation}\label{HM}
\mathcal{V}(\mu):=\sum_{i=1}^m p_i\,\mu^{f_i}
\end{equation}
for every $\mu \in \mathcal{P}(\Omega)$.
The so-called \emph{Hutchison metric} $h$ (a.k.a. Kantorovich-Rubinstein and Wasserstein 
$1$-metric) on $\mathcal{P}(\Omega)$ is defined by 
\begin{equation}\label{hutchm}
h(\mu,\nu):= \sup \bigg\{\int_\Omega g\,d\mu - \int_\Omega  g\,d\nu:\, g \in Lip_1(\Omega,\mathbb{R}) \bigg\}, 
\end{equation}
whereby $Lip_1(X,\mathbb{R})$ is the class of all non-expanding functions $g:\Omega \rightarrow \mathbb{R}$, i.e., 
functions fulfilling $\vert g(x)-g(y) \vert \leq \rho(x,y)$ for all $x,y \in \Omega$. 
It is not difficult to show that $\mathcal{V}$ is a contraction on $(\mathcal{P}(\Omega),h)$, that 
$h$ is a metrization of the topology of weak convergence on $\mathcal{P}(\Omega)$,  
and that $(\mathcal{P}(\Omega),h)$ is a compact metric space (see \cite{Bar,Dud}). Consequently, again
by Banach's Fixed Point theorem, it follows that there is a unique, globally attractive fixed point
$\mu^\star \in \mathcal{P}(\Omega)$ of $\mathcal{V}$, i.e., for every $\nu \in \mathcal{P}(\Omega)$ 
we have
\begin{equation}\label{eq:mustar.global}
\lim_{n \rightarrow \infty} h\big(\mathcal{V}^n(\nu),\mu^\star\big)=0.
\end{equation}

The fixed point $Z^\star$ and the measure $\mu^\star$ are closely related - in fact, 
the support $\textrm{supp}(\mu^\star)$ of $\mu^\star$ is $Z^\star$ (see \cite{Bar,Ku}).

\section{Singular $d$-stochastic measures with uniform $(d-1)$-dimensional marginals}
We start by recalling the construction of $d$-stochastic measures with fractal support via so-called 
generalized transformation matrices going back to 
\cite{TFS}. Consider the dimension $d\geq 2$, let $\mathbf{m}:=(m_1,\ldots,m_d) \in \mathbb{N}^d
$ be arbitrary but fixed, 
set $I_i=\{1,\ldots,m_i\}$ for every $i \in \{1,\ldots,d\}$, and define
\begin{equation}\label{ID}
\mathcal{I}_d^\mathbf{m}:=\bigtimes_{i=1}^d \{1,\ldots,m_i\}. \,\,\, 
\end{equation}
We will denote elements in $\mathcal{I}_d^\mathbf{m}$ in the 
form $\mathbf{i} =(i_1,\ldots,i_d)$, and, for every
probability distribution $\tau$ on $(\mathcal{I}_d^\mathbf{m},2^{\mathcal{I}_d^\mathbf{m}})$ 
write $\tau(\mathbf{i}):=\tau(\{\mathbf{i}\})$ for the point mass in $\mathbf{i}$.
\begin{definition}[Extended version of \cite{TFS}]\label{trafod}
Suppose that $d\geq 2$, that $\mathbf{m} \in \mathbb{N}^d$ fulfills $\min_{j} m_j \geq 2$, and let 
$\mathcal{I}_d^\mathbf{m}$ be defined according to eq. (\ref{ID}). 
A probability distribution $\tau$ on $(\mathcal{I}_d^\mathbf{m}, 2^{\mathcal{I}_d^\mathbf{m}})$ 
is called 
\emph{generalized transformation matrix} if for every $j \in \{1,\ldots,d\}$ 
and every $k \in \{1,\ldots,m_j\}$
$$
\tau\left(I_1 \times I_2 \times \ldots \times I_{j-1} \times \{k\} \times I_{j+1} \times 
\ldots \times  I_d \right)=\sum_{\mathbf{i}\in \mathcal{I}_d^\mathbf{m}: \,\,i_j=k}\tau(\mathbf{i}) >0
$$
holds. $\mathcal{S}_{\tau} \subseteq \mathcal{I}_d^\mathbf{m}$ will denote the support of $\tau$, i.e., 
$$
\mathcal{S}_{\tau}:=\{\mathbf{i}\in \mathcal{I}_d^\mathbf{m}: \, \tau(\mathbf{i})>0\}.
$$
Moreover, $\mathcal{T}_d^\mathbf{m}$ will denote the family of all $d$-dimensional 
generalized transformation matrices on $\mathcal{I}_d^\mathbf{m}$.   
\end{definition}
Every $\tau \in \mathcal{T}_d^\mathbf{m}$ induces a partition of $[0,1]^d$ in the following way: 
For each $ j \in \{1,\ldots,d\}$ define $a^j_0:=0$,  
\begin{equation}\label{eq:col.sums}
a^j_k:=\sum_{\mathbf{i}\in \mathcal{I}_d^\mathbf{m}: \,\,i_j\leq k}\tau(\mathbf{i}), 
\quad E^j_k:=[a^j_{k-1}, a^j_k],
\end{equation}
for every $k \in I_j$. Then $\bigcup_{k \in I_j} E^j_k = [0,1]$ and 
$E^j_{k_1} \cap E^j_{k_2}$ is empty or consists of exactly one point whenever $k_1\not = k_2$. Setting
$R_{\mathbf{i}}:=\times_{j=1}^d E^j_{i_j}$ for every $\mathbf{i} \in \mathcal{I}_d^\mathbf{m}$
therefore yields a family of compact rectangles
$(R_{\mathbf{i}})_{\mathbf{i}\in \mathcal{I}_d^\mathbf{m}}$ whose union is $[0,1]^d$ and which additionally fulfills that
$R_{\mathbf{i}_1} \cap R_{\mathbf{i}_2}$ is empty or a set of $\lambda_d$-measure zero whenever
$\mathbf{i}_1 \not = \mathbf{i}_2$. \\
Moreover, every $\tau \in \mathcal{T}_d^\mathbf{m}$ induces affine contractions 
$f_{\mathbf{i}}:[0,1]^d \rightarrow R_{\mathbf{i}}$, given by
\begin{displaymath}
f_{\mathbf{i}}(x_1,\ldots,x_d) = 
\left( \begin{array}{c}
 a^1_{i_1-1} \\
 a^2_{i_2-1} \\
 \vdots \\
 a^d_{i_d-1}
\end{array} \right) \, + \, 
\left( \begin{array}{c}
 (a^1_{i_1} - a^1_{i_1-1}) \, x_1\\
 (a^2_{i_2} - a^2_{i_2-1}) \, x_2\\
 \vdots \\
 (a^d_{i_d} - a^d_{i_d-1}) \, x_d\\
\end{array} \right).
\end{displaymath}
Since the $j$-th coordinate of $f_{\mathbf{i}}(x_1,\ldots,x_d)$ only depends on $i_j$ and $x_j$ 
we will also denote it by $f^j_{i_j}$, i.e., $f^j_{i_j}:[0,1] \rightarrow E^j_{i_j}$, $f^j_{i_j}(x_j):= a^j_{i_j-1} \,+\,(a^j_{i_j} - a^j_{i_j-1}) \, x_j$.
It follows directly from the construction that 
\begin{equation}\label{ifsdefmulti}
\Big\{[0,1]^d, (f_\mathbf{i})_{\mathbf{i} \in \mathcal{S}_\tau},\tau(\mathbf{i})_{\mathbf{i} \in 
\mathcal{S}_\tau}\Big\}
\end{equation}
is an IFSP. Moreover it is straightforward to verify that this IFSP fulfills the 
open set condition. According to \cite{TFS} the Markov operator 
$\mathcal{V}_{\tau}: \mathcal{P}([0,1]^d) \rightarrow \mathcal{P}([0,1]^d)$, given by 
\begin{equation}\label{eq:markov}
\mathcal{V}_{\tau}(\mu) = \sum_{\mathbf{i} \in \mathcal{S}_\tau} \tau(\mathbf{i})\, \mu^{f_\mathbf{i}}
\end{equation}
maps $\mathcal{P}_d$ into $\mathcal{P}_d$, so we 
can also view $\mathcal{V}_{\tau}$ as a transformation mapping $\kc_d$ to $\kc_d$ 
and write $\mathcal{V}_{\tau}(A)$ for every $A \in \kc_d$. 
Considering that $\mathcal{P}_d$ is a closed metric space 
(again see \cite{TFS}) it follows that the unique fixed point $\mu_\tau^\star$ of 
$\mathcal{V}_{\tau}$ is an element of $\mathcal{P}_d$ and as such 
corresponds to a unique copula which we will denote by $A_\tau^\star \in \kc_d$, i.e., 
we have $\mu_{A_\tau^\star}=\mu_\tau^\star$.  \\
 
For the rest of the paper we will consider $d\geq 3$. 
We now show, how additional properties of $\tau$ translate to properties of $A_\tau^\star$. 
In particular, we will formulate a sufficient condition for $\tau$ assuring that the 
induced operator $\mathcal{V}_\tau$ preserves uniform $(d-1)$-dimensional marginals.  
Doing so we will work with the following definition: 
\begin{definition}
We say that $\tau \in \mathcal{T}_d^\mathbf{m}$ with $d\geq 3$ fulfills the uniformity condition w.r.t. coordinate $j_0 \in \{1,\ldots,d\}$, if 
\begin{equation}\label{eq:cond.tau}
\sum_{l=1}^{m_{j_0}} \tau((i_1,\ldots,i_{j_0-1},l,i_{j_0+1},\ldots,i_d)) = \prod_{j\neq j_0} \lambda(E_{i_j}^j) = 
\lambda_{d-1}\left(\bigtimes_{j\neq j_0}E_{i_j}^j \right)
\end{equation}
holds for all $\mathbf{i}_{-j_0} \in \bigtimes_{i\neq j_0} \{1,\ldots,m_i\}$. 
\end{definition}
IFSPs induced by generalized transformation matrices fulfilling the uniformity condition 
preserve uniform marginals - the following result holds:
\begin{lemma}\label{thm:preserve.marginal}
Suppose that $\mu \in \mathcal{P}_d$ fulfills $\mu^{\pi_{1:d-1}}=\lambda_{d-1}$ and that 
$\tau \in  \mathcal{T}_d^\mathbf{m}$ fulfills the uniformity condition
w.r.t. coordinate $d$. Then $(\mathcal{V}_\tau(\mu))^{\pi_{1:d-1}}=\lambda_{d-1}$.
\end{lemma}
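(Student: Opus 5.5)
Since $\lambda_{d-1}$ is determined by its values on rectangles $\bigtimes_{j=1}^{d-1}[0,x_j]$, it suffices to check that $\nu:=\mathcal{V}_\tau(\mu)$ satisfies
$$
\nu\Big(\bigtimes_{j=1}^{d-1}[0,x_j]\times[0,1]\Big)=\prod_{j=1}^{d-1}x_j
$$
for all $x_1,\ldots,x_{d-1}\in[0,1]$. Write $B:=\bigtimes_{j=1}^{d-1}[0,x_j]\times[0,1]$. By \eqref{eq:markov},
$$
\nu(B)=\sum_{\mathbf{i}\in\mathcal{S}_\tau}\tau(\mathbf{i})\,\mu\big(f_\mathbf{i}^{-1}(B)\big).
$$
The maps $f_\mathbf{i}$ act coordinatewise, and $f^d_{i_d}$ maps $[0,1]$ into $E^d_{i_d}\subseteq[0,1]$. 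Hence $f_\mathbf{i}^{-1}(B)=\bigtimes_{j=1}^{d-1}(f^j_{i_j})^{-1}([0,x_j])\times[0,1]$.

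Set $F_{i_j}^j:=(f^j_{i_j})^{-1}([0,x_j])$. This is an interval $[0,t_j]$, where
$$
t_j=\min\Big\{1,\max\Big\{0,\frac{x_j-a^j_{i_j-1}}{\lambda(E^j_{i_j})}\Big\}\Big\}
$$
(the degenerate case $x_j=a^j_{i_j-1}$ gives a single point, which is $\lambda$-null and harmless). The set $f_\mathbf{i}^{-1}(B)$ therefore only involves the first $d-1$ coordinates. The hypothesis $\mu^{\pi_{1:d-1}}=\lambda_{d-1}$ then gives
$$
\mu\big(f_\mathbf{i}^{-1}(B)\big)=\prod_{j=1}^{d-1}\lambda\big(F^j_{i_j}\big)=\prod_{j=1}^{d-1}\frac{\lambda\big([0,x_j]\cap E^j_{i_j}\big)}{\lambda\big(E^j_{i_j}\big)}.
$$
The last identity holds because $f^j_{i_j}$ is affine with slope $\lambda(E^j_{i_j})>0$; positivity of this slope is guaranteed by Definition \ref{trafod}.

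Next group the sum by $\mathbf{i}_{-d}$ and sum over $i_d=l$ first. The uniformity condition \eqref{eq:cond.tau} w.r.t. coordinate $d$ replaces $\sum_l\tau(\mathbf{i}_{-d},l)$ by $\prod_{j=1}^{d-1}\lambda(E^j_{i_j})$. Indices outside $\mathcal{S}_\tau$ contribute $0$, so extending the sum from $\mathcal{S}_\tau$ to all of $\mathcal{I}^\mathbf{m}_d$ changes nothing. The denominators then cancel, and the sum factorizes:
$$
\nu(B)=\prod_{j=1}^{d-1}\sum_{k=1}^{m_j}\lambda\big([0,x_j]\cap E^j_k\big)=\prod_{j=1}^{d-1}x_j .
$$
The final step uses that the $E^j_k$ partition $[0,1]$ up to finitely many points.

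There is no real obstacle; the only points requiring care are these:
\begin{itemize}
\item the preimage computation, in particular that the last coordinate imposes no constraint;
\item the justification for replacing $\mathcal{S}_\tau$ by the full index set;
\item the boundary overlaps of the $E^j_k$, which are Lebesgue-null.
\end{itemize}
Finally, the rectangles $\bigtimes_{j=1}^{d-1}[0,x_j]$ form a $\cap$-stable generator of $\mathcal{B}([0,1]^{d-1})$, so the two probability measures agree everywhere.
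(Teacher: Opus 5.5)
Your proposal is correct and is essentially the paper's proof. Both arguments evaluate $\mathcal{V}_\tau(\mu)$ on cylinders over rectangles, pull back through the coordinatewise maps $f_\mathbf{i}$, use $\mu^{\pi_{1:d-1}}=\lambda_{d-1}$, collapse the sum over the last index via the uniformity condition, undo the affine change of variables, and conclude with a generator argument. The one difference is that you restrict to anchored boxes $\bigtimes_{j}[0,x_j]$ and compute explicitly, whereas the paper uses general rectangles $\bigtimes_j G_j$ and the transformation formula.

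One small slip, which does not affect the argument: for $x_j<a^j_{i_j-1}$ the preimage $(f^j_{i_j})^{-1}([0,x_j])$ is empty rather than $[0,0]$. This changes nothing, since you only use $\lambda(F^j_{i_j})$ and both sets are Lebesgue-null.
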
 
\begin{proof}
We show that for every rectangle $G=\bigtimes_{i=1}^{d-1} G_i \in \mathcal{B}([0,1]^{d-1})$
we have 
\begin{equation}\label{eq:marginal}
(\mathcal{V}_\tau(\mu))\left(\bigtimes_{j=1}^{d-1} G_j \times [0,1]\right) = 
\lambda_{d-1} \left( \bigtimes_{j=1}^{d-1} G_j \right)
\end{equation}
and proceed as follows: The left side of eq. (\ref{eq:marginal}) 
can be expressed as 
\begin{align*}
(\mathcal{V}_\tau(\mu))\left(\bigtimes_{j=1}^{d-1} G_j \times [0,1]\right) &= 
\sum_{\mathbf{i} \in \mathcal{I}_d^\mathbf{m}} \tau(\mathbf{i}) \,\mu^{f_{\mathbf{i}}} 
    \left(\bigtimes_{j=1}^{d-1} G_j \times [0,1]\right) \\
    &= \sum_{\mathbf{i} \in \mathcal{I}_d^\mathbf{m}} \tau(\mathbf{i}) \,\mu 
    \left(f_{\mathbf{i}}^{-1} \left(\bigtimes_{j=1}^{d-1} G_j \times [0,1]\right)\right) \\
    &= \sum_{\mathbf{i} \in \mathcal{I}_d^\mathbf{m}} \tau(\mathbf{i}) \,\mu 
    \left(\bigtimes_{j=1}^{d-1} (f_{i_j}^j)^{-1}(G_j) \times [0,1]\right) \\
    &= \sum_{\mathbf{i} \in \mathcal{I}_d^\mathbf{m}} \tau(\mathbf{i}) \,\mu^{\pi_{1:d-1}} 
    \left(\bigtimes_{j=1}^{d-1} (f_{i_j}^j)^{-1}(G_j) \right).
 \end{align*}
Hence, using the fact that by assumption $\mu^{\pi_{1:d-1}}=\lambda_{d-1}$ holds, it 
altogether follows that 
\begin{align*}
(\mathcal{V}_\tau(\mu))\left(\bigtimes_{j=1}^{d-1} G_j \times [0,1]\right)
&= \sum_{\mathbf{i} \in \mathcal{I}_d^\mathbf{m}} \tau(\mathbf{i}) \,\lambda_{d-1} 
    \left(\bigtimes_{j=1}^{d-1} (f_{i_j}^j)^{-1}(G_j) \right) = 
    \sum_{\mathbf{i} \in \mathcal{I}_d^\mathbf{m}} \tau(\mathbf{i}) \,\prod_{j=1}^{d-1} \lambda^{f_{i_j}^j}(G_j) \\
    &= \sum_{i_1 \in I_1, \ldots, i_{d-1} \in I_{d-1}} \, \sum_{k=1}^{m_d} \, 
    \tau(i_1,\ldots,i_{d-1},k) \,\prod_{j=1}^{d-1} \lambda^{f_{i_j}^j}(G_j) \\
    &= \sum_{i_1 \in I_1, \ldots, i_{d-1} \in I_{d-1}} \,  \, \prod_{j=1}^{d-1} \lambda(E_{i_j}^j)
    \,\prod_{j=1}^{d-1} \lambda^{f_{i_j}^j}(G_j) \\
    &= \lambda_{d-1} \left(\bigtimes_{j=1}^{d-1} G_j \right),
\end{align*} 
whereby the last equality follows from the transformation formula for 
the Lebes\-gue measure under affine transformations. 
As a direct consequence, the probability measures $(\mathcal{V}_\tau(\mu))^{\pi_{1:d-1}}$
and $\lambda_{d-1}$ coincide on the family of all measurable rectangles. Since the 
latter constitutes a generator of the Borel $\sigma$-field $\mathcal{B}([0,1]^{d-1})$, the proof 
is complete.
\end{proof} 
Proceeding analogously for every other coordinate $j_0 \in \{1,\ldots,d\}$ yields the 
following general result: 
\begin{theorem}\label{thm:main1}
Suppose that $\tau \in \mathcal{T}_d^\mathbf{m}$ fulfills the uniformity condition w.r.t. coordinate $j_0 \in \{1,\ldots,d\}$.
Then $(\mathcal{V}_\tau(\mu))^{\pi_{-j_0}}=\lambda_{d-1}$ if $\mu^{\pi_{-j_0}}=\lambda_{d-1}$. \\
Moreover, if for every $j_0 \in \{1,\ldots,d\}$ we have that (i) $\tau$ fulfills the uniformity 
condition w.r.t. coordinate $j_0$ and that (ii) $\mu^{\pi_{-j_0}}= \lambda_{d-1}$ holds, then   
all $(d-1)$-dimensional marginals of $\mathcal{V}_\tau(\mu)$ coincide with $\lambda_{d-1}$.  
\end{theorem}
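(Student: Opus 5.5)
The plan is to reduce the first assertion to Lemma \ref{thm:preserve.marginal} by permuting coordinates, or, more directly, to rerun its proof with coordinate $d$ replaced by $j_0$. I would take the direct route, since it avoids having to check how $\tau$, the partition points $a^j_k$ and the maps $f_\mathbf{i}$ transform under a permutation.

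Fix $j_0$ and a measurable rectangle $G=\bigtimes_{j\neq j_0} G_j$ in $[0,1]^{d-1}$. Write $\widetilde G$ for the cylinder in $[0,1]^d$ with factor $G_j$ in each coordinate $j\neq j_0$ and factor $[0,1]$ in coordinate $j_0$. Because every $f_\mathbf{i}$ acts coordinatewise through the maps $f^j_{i_j}$, we get
$$f_\mathbf{i}^{-1}(\widetilde G)=\pi_{-j_0}^{-1}\Big(\bigtimes_{j\neq j_0}(f^j_{i_j})^{-1}(G_j)\Big).$$
Here we use that $(f^{j_0}_{i_{j_0}})^{-1}([0,1])=[0,1]$, which holds since $f^{j_0}_{i_{j_0}}$ maps $[0,1]$ onto $E^{j_0}_{i_{j_0}}$. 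Applying the hypothesis $\mu^{\pi_{-j_0}}=\lambda_{d-1}$ then gives
$$(\mathcal V_\tau(\mu))(\widetilde G)=\sum_{\mathbf i}\tau(\mathbf i)\prod_{j\neq j_0}\lambda\big((f^j_{i_j})^{-1}(G_j)\big).$$

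Next I group the sum over $\mathbf i$ according to $\mathbf i_{-j_0}$, summing $i_{j_0}=l$ over $\{1,\ldots,m_{j_0}\}$. The summand depends on $\mathbf i$ only through $\mathbf i_{-j_0}$, so the uniformity condition (\ref{eq:cond.tau}) w.r.t. $j_0$ replaces the inner sum by $\prod_{j\neq j_0}\lambda(E^j_{i_j})$. The affine transformation formula gives $\lambda(E^j_{k})\,\lambda((f^j_{k})^{-1}(G_j))=\lambda(G_j\cap E^j_k)$. The remaining sum then factorizes into $\prod_{j\neq j_0}\sum_{k\in I_j}\lambda(G_j\cap E^j_k)=\prod_{j\neq j_0}\lambda(G_j)$, because the intervals $E^j_k$, $k \in I_j$, overlap only in endpoints and cover $[0,1]$.

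This shows that $(\mathcal V_\tau(\mu))^{\pi_{-j_0}}$ and $\lambda_{d-1}$ agree on measurable rectangles. Rectangles form a $\cap$-stable generator of $\mathcal B([0,1]^{d-1})$, so the two measures coincide. The second assertion follows by applying the first to each $j_0\in\{1,\ldots,d\}$ separately, since hypotheses (i) and (ii) are exactly the first assertion's assumptions for each coordinate.

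The main point needing care is the bookkeeping when the free coordinate sits in the middle: one must check that the factor for coordinate $j_0$ drops out and that the uniformity sum runs over the correct index. Once the preimage identity above is written down, this is routine.
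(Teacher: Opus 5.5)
Your proposal is correct and follows the paper's approach: the paper obtains the first assertion by rerunning the proof of Lemma \ref{thm:preserve.marginal} with coordinate $d$ replaced by $j_0$, and the second by applying it to each coordinate. Your explicit preimage identity, the factorization into $\prod_{j\neq j_0}\sum_{k}\lambda(G_j\cap E^j_k)$, and the appeal to a $\cap$-stable generator spell out steps the paper leaves implicit.
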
 
As mentioned in the introduction, in what follows we will let $\mathcal{P}_d^{\lambda_{d-1}}$ denote the family of 
all $d$-stochastic measures for which all $(d-1)$-dimensional marginals coincide with 
$\lambda_{d-1}$; $\kc_d^{\Pi_{d-1}}$ will denote the corresponding family of all $d$-dimensional copulas. Theorem \ref{thm:main1} has the following consequence.
\begin{theorem}\label{thm:main2}
Suppose that $\tau \in  \mathcal{T}_d^\mathbf{m}$ fulfills the uniformity condition 
w.r.t. every coordinate.
Then all $(d-1)$-dimensional marginals of $\mu_\tau^\star$ are $\lambda_{d-1}$, i.e., 
$\mu_\tau^\star \in \mathcal{P}_d^{\lambda_{d-1}}$. 
\end{theorem}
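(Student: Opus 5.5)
The natural route is to start the iteration at $\lambda_d$ and pass to the limit. Take $\nu_0:=\lambda_d=\mu_{\Pi_d}$. Every $(d-1)$-dimensional marginal of $\nu_0$ equals $\lambda_{d-1}$, so $\nu_0\in\mathcal{P}_d^{\lambda_{d-1}}$. Define $\nu_n:=\mathcal{V}_\tau^n(\nu_0)$. By assumption $\tau$ fulfills the uniformity condition w.r.t. every coordinate $j_0\in\{1,\ldots,d\}$. The second part of Theorem \ref{thm:main1} therefore applies, and induction on $n$ gives $\nu_n^{\pi_{-j_0}}=\lambda_{d-1}$ for all $j_0$ and all $n$. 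Since $\mathcal{V}_\tau$ maps $\mathcal{P}_d$ into $\mathcal{P}_d$, each $\nu_n$ is also $d$-stochastic. Hence $\nu_n\in\mathcal{P}_d^{\lambda_{d-1}}$ for every $n\in\mathbb{N}$.

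By eq. (\ref{eq:mustar.global}) we have $h(\nu_n,\mu_\tau^\star)\to 0$, so $\nu_n\to\mu_\tau^\star$ weakly. It remains to show that $\mathcal{P}_d^{\lambda_{d-1}}$ is closed under weak convergence.

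Fix $j_0$. The projection $\pi_{-j_0}$ is continuous and in fact $1$-Lipschitz. For any $g\in Lip_1([0,1]^{d-1},\mathbb{R})$ the composition $g\circ\pi_{-j_0}$ is therefore in $Lip_1([0,1]^d,\mathbb{R})$, which yields
$$
h\big(\nu_n^{\pi_{-j_0}},(\mu_\tau^\star)^{\pi_{-j_0}}\big)\leq h(\nu_n,\mu_\tau^\star)\to 0 .
$$
The left-hand measures are all equal to $\lambda_{d-1}$, and $h$ is a metric, so $(\mu_\tau^\star)^{\pi_{-j_0}}=\lambda_{d-1}$ for every $j_0$.

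The one-dimensional marginals are then automatically uniform, being marginals of $\lambda_{d-1}$; alternatively they follow from $\mu_\tau^\star\in\mathcal{P}_d$, which is already known. This gives $\mu_\tau^\star\in\mathcal{P}_d^{\lambda_{d-1}}$.

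The argument has no real obstacle. The only points needing care are that the starting measure must satisfy hypothesis (ii) of Theorem \ref{thm:main1} for all coordinates simultaneously, which is why $\lambda_d$ is the right choice, and the closedness step. The closedness step reduces cleanly to the non-expansiveness of push-forwards under $1$-Lipschitz maps with respect to the Hutchinson metric.
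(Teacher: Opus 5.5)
Your proof is correct and follows the paper's argument: you start the iteration at $\lambda_d$, apply Theorem \ref{thm:main1} inductively to keep every $(d-1)$-dimensional marginal equal to $\lambda_{d-1}$, and pass to the limit via eq. (\ref{eq:mustar.global}). The only difference is in the closedness step. The paper simply cites that weak convergence implies weak convergence of the marginals, whereas you make this explicit through the bound $h\big(\nu_n^{\pi_{-j_0}},(\mu_\tau^\star)^{\pi_{-j_0}}\big)\leq h(\nu_n,\mu_\tau^\star)$, which follows because the projection is $1$-Lipschitz.
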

\begin{proof}
According to eq. (\ref{eq:mustar.global}) we know that for every $\mu \in \mathcal{P}_d$ 
the sequence $(\mathcal{V}_\tau^n(\mu))_{n \in \mathbb{N}}$ converges w.r.t. the Hutchinson 
metric $h$ to the unique fixed point $\mu_\tau^\star$. 
Considering $\mu=\lambda_d$, according to Theorem \ref{thm:main1} we have
$\mathcal{V}_\tau^n(\lambda_d) \in \mathcal{P}_d^{\lambda_{d-1}}$ for every $n \in \mathbb{N}$. 
Since $h$ is a metrization of weak convergence and weak convergence of a sequence of 
proba\-bility measures on $[0,1]^d$ implies weak convergence of all marginals, 
$\mu_\tau^\star \in \mathcal{P}_d^{\Pi_{d-1}}$ follows immediately.   
\end{proof}
\begin{cor}\label{thm:main3}
Suppose that $\tau \in  \mathcal{T}_d^\mathbf{m}$ fulfills the uniformity condition w.r.t. each coordinate and that 
there exists at least one $\mathbf{i} \in \mathcal{I}_d^\mathbf{m}$ with 
$\tau(\mathbf{i})=0$.
Then we have $\lambda_d(\textrm{supp}(\mu_\tau^\star))=0$, $\mu_\tau^\star$ is singular 
w.r.t. $\lambda_d$, and $\mu_\tau^\star \in \mathcal{P}_d^{\lambda_{d-1}}$.  
\end{cor}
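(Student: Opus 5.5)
The membership $\mu_\tau^\star \in \mathcal{P}_d^{\lambda_{d-1}}$ is exactly Theorem \ref{thm:main2}, so only the two measure-theoretic statements need work. Singularity follows from $\lambda_d(\textrm{supp}(\mu_\tau^\star))=0$: the support is then a $\lambda_d$-null Borel set carrying full $\mu_\tau^\star$-mass. So the core task is to show that the attractor $Z^\star=\textrm{supp}(\mu_\tau^\star)$ of the IFSP (\ref{ifsdefmulti}) has Lebesgue measure zero.

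I would compute $\lambda_d(\mathcal{H}^n([0,1]^d))$ directly. Since $Z^\star \subseteq [0,1]^d$ and $\mathcal{H}$ is monotone with respect to inclusion, we get $Z^\star=\mathcal{H}^n(Z^\star)\subseteq \mathcal{H}^n([0,1]^d)$ for every $n$. Moreover $\mathcal{H}^n([0,1]^d)=\bigcup_{\mathbf{i}^1,\ldots,\mathbf{i}^n\in\mathcal{S}_\tau} f_{\mathbf{i}^1}\circ\cdots\circ f_{\mathbf{i}^n}([0,1]^d)$. Each $f_\mathbf{i}$ is affine and diagonal with Jacobian determinant $\lambda_d(R_\mathbf{i})=\prod_{j=1}^d\lambda(E^j_{i_j})$. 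Hence
\begin{equation*}
\lambda_d(\mathcal{H}^n([0,1]^d))\le \sum_{\mathbf{i}^1,\ldots,\mathbf{i}^n\in\mathcal{S}_\tau}\prod_{k=1}^n\lambda_d(R_{\mathbf{i}^k}) = \Big(\sum_{\mathbf{i}\in\mathcal{S}_\tau}\lambda_d(R_\mathbf{i})\Big)^n .
\end{equation*}
It therefore suffices to show $q:=\sum_{\mathbf{i}\in\mathcal{S}_\tau}\lambda_d(R_\mathbf{i})<1$; then $\lambda_d(Z^\star)\le q^n\to 0$.

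The rectangles $R_\mathbf{i}$, $\mathbf{i}\in\mathcal{I}_d^\mathbf{m}$, tile $[0,1]^d$ up to null overlaps, so $\sum_{\mathbf{i}\in\mathcal{I}_d^\mathbf{m}}\lambda_d(R_\mathbf{i})=1$. This gives $q=1-\sum_{\mathbf{i}\notin\mathcal{S}_\tau}\lambda_d(R_\mathbf{i})$. By assumption some $\mathbf{i}_0$ has $\tau(\mathbf{i}_0)=0$, so $q\le 1-\lambda_d(R_{\mathbf{i}_0})$. What remains is to check $\lambda_d(R_{\mathbf{i}_0})>0$, i.e., that every interval $E^j_k$ is nondegenerate. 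This holds because $\lambda(E^j_k)=a^j_k-a^j_{k-1}=\tau(\{\mathbf{i}: i_j=k\})>0$ by the defining positivity condition of Definition \ref{trafod}. Thus $q<1$.

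I expect no real obstacle. The only points needing care are the monotonicity argument $Z^\star\subseteq\mathcal{H}^n([0,1]^d)$ and the fact that the positivity of the column sums in Definition \ref{trafod} guarantees nondegenerate rectangles. The uniformity condition itself is only used through Theorem \ref{thm:main2}.
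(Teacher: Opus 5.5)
Your proof is correct and takes essentially the same route as the paper. Both bound $\lambda_d(\mathcal{H}^n_\tau([0,1]^d))$ geometrically using a rectangle $R_{\mathbf{i}_0}$ with $\tau(\mathbf{i}_0)=0$, and both get membership in $\mathcal{P}_d^{\lambda_{d-1}}$ from Theorem \ref{thm:main2}. The paper uses the rate $1-\lambda_d(R_{\mathbf{i}_0})$ together with continuity from above; you use the slightly sharper $\sum_{\mathbf{i}\in\mathcal{S}_\tau}\lambda_d(R_\mathbf{i})$ and the inclusion $Z^\star\subseteq\mathcal{H}^n([0,1]^d)$. Your Jacobian computation and your check that the positivity condition in Definition \ref{trafod} makes every $E^j_k$ nondegenerate spell out steps the paper only calls straightforward.
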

\begin{proof}
Since $\textrm{supp}(\mu_\tau^\star)$ coincides with the fixed point $Z_\tau^\star$ of the 
Hutchinson operator 
$\mathcal{H}_\tau: \mathcal{K}([0,1]^d) \rightarrow \mathcal{K}([0,1]^d)$, given 
by
$$
\mathcal{H}_\tau(Z) = \bigcup_{\mathbf{i} \in \mathcal{S}_\tau} f_\mathbf{i}(Z),
$$ 
it suffices to show $\lambda_d(Z_\tau^\star)=0$, which can be easily established as follows: 
Consi\-dering $Z=[0,1]^d$ we obviously have that the sequence 
$(\mathcal{H}^n_\tau([0,1]^d))_{n \in \mathbb{N}}$ in $\mathcal{K}([0,1]^d)$ is monotonically 
decreasing to $Z_\tau^\star \in \mathcal{K}([0,1]^d)$. Letting $\mathbf{i}$ denote an 
element of $\mathcal{I}_d$ with $\tau(\mathbf{i})=0$ and setting 
$q=\prod_{j=1}^d \lambda(E_{i_j}^j) \in (0,1)$, it is straightforward to verify that 
$\lambda_d(\mathcal{H}^n_\tau([0,1]^d))\leq (1-q)^n$ holds for every 
$n \in \mathbb{N}$ (in each step, the volume shrinks at least by the factor 
$1-q$). Having that, using 
$$
Z_\tau^\star= \bigcap_{n=1}^\infty \mathcal{H}^n_\tau([0,1]^d)
$$
and the fact that $\lambda_d$ is continuous from above, directly yields  
$\lambda_d(\textrm{supp}(\mu_\tau^\star))=0$. Since the latter implies that 
$\mu_\tau^\star$ is singular w.r.t. $\lambda_d$, the proof is complete.
\end{proof}
Building on the previous general results, in the next section we study elements in 
$\mathcal{P}_d^{\lambda_{d-1}}$ describing the situation of full predictability, 
one of them being a $3$-stochastic measure whose support is 
a Sierpinski tetrahedron. 

\section{A completely dependent $3$-stochastic Sierpinski tetrahedron measure and some 
generalizations}
Despite mere existence might be surprising, it is not difficult to show that 
the class $\pc_3^{\lambda_2}$ and, more generally,
the family $\pc_d^{\lambda_{d-1}}$ contains completely dependent elements, i.e., 
elements describing the exact opposite of independence. 
In fact, considering $\mathbf{X}=(X_1,\ldots,X_{d-1}) \sim \lambda_{d-1}$ and 
defining $X_d:=\sum_{i=1}^{d-1} X_i \,\, (\textrm{mod(1)})$, it is, firstly, straightforward 
to verify that $X_d$ is uniformly distributed on $[0,1]$; and secondly, that every $X_j$ is 
a function of the other $(d-1)$ variables. In other words, the distribution $\mu$ 
of $(X_1,\ldots,X_d)$ is an element of $\pc_d^{\lambda_{d-1}}$. 
We are convinced that this simple but striking example already 
exists in the literature, but we have not been able to find any reference. \\
In the rest of this section we now show, how the IFSP approach can be used 
to construct other examples of the afore-mentioned type with the additional property
that the support of the $d$-stochastic measure $\mu \in \pc_d^{\lambda_{d-1}}$ 
is concentrated on a self-similar set. 
We first derive a general result for arbitrary $d \geq 3$ and then focus on $d=3$ 
to show the existence of a completely dependent $3$-stochastic measure with uniform two-dimensional marginals whose self-similar support is a Sierpinski tetrahedon. 

Minimizing technical complexity and keeping the notation as simple as possible, in 
the following we will mainly work with generalized transformation matrices fulfilling
additional uniformity conditions:
\begin{definition}\label{def:U}
Suppose that $d,N \geq 2$ and set $\mathbf{m}=(N,\ldots,N)$.
Then $\mathcal{U}^N_d$ will denote the family of all $\tau \in \mathcal{T}_d^\mathbf{m}$  
satisfying the following conditions:
\begin{itemize}
    \item[(i)] for every $j\in \{1,\ldots,d\}$ and every $k \in \{1,\ldots,N\}$ 
        we have $E^j_k=[\frac{k-1}{N},\frac{k}{N}]$. 
    \item[(ii)] $\tau$ fulfills the uniformity condition with respect to every coordinate.
\end{itemize}
\end{definition}
Notice that for $\tau \in \mathcal{U}_d^N$ the sets $R_\mathbf{i}$ are squares with side 
length $\frac{1}{N}$ and the uniformity condition (\ref{eq:cond.tau}) for coordinate $j_0$ 
boils down to 
\begin{equation}\label{eq:cond.tau2}
\sum_{l=1}^{N} \tau((i_1,\ldots,i_{j_0-1},l,i_{j_0+1},\ldots,i_d)) = \tfrac{1}{N^{d-1}}
\end{equation}
for every $\mathbf{i}_{-j_0} \in \{1,\ldots,N\}^{d-1}$. 
The following lemma collects some properties of the class $\mathcal{U}^N_d$ that will
be used in the sequel.
\begin{lemma}\label{lem:U}
    For all $d,N \geq 2$ the following assertions hold: 
    \begin{enumerate}
        \item $\mathcal{U}^N_d$ is convex.
        \item For every $\tau \in \mathcal{U}^N_d$ the induced IFSP only contains 
        similarities. 
        \item For every fixed $\tau \in \mathcal{U}^N_d$ and arbitrary permutations $\epsilon_1,\ldots,\epsilon_d$ of $\{1,\ldots,N\}$ we have that the probability measure $\tau'$ on $\{1,\ldots,N\}^d$, defined by
        $$
        \tau'((i_1,\ldots,i_d)):=\tau(\epsilon_1(i_1),\ldots,\epsilon_d(i_d)), 
        $$
        is an element of $\mathcal{U}^N_d$ too.
    \end{enumerate}
\end{lemma}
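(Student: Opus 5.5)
The key observation is that, under condition (i) of Definition \ref{def:U}, the class $\mathcal{U}^N_d$ is described by finitely many \emph{linear} constraints on $\tau$. The plan is therefore to rewrite membership as a linear system first, and then to check each of the three assertions against that system.

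\textbf{Linear description.} A probability vector $\tau$ on $\{1,\ldots,N\}^d$ belongs to $\mathcal{U}^N_d$ if and only if:
\begin{itemize}
\item[(a)] $\sum_{\mathbf{i}: i_j=k}\tau(\mathbf{i})=\tfrac1N$ for all $j$ and $k$, which is equivalent to $a^j_k=k/N$, i.e.\ to condition (i);
\item[(b)] eq.\ (\ref{eq:cond.tau2}) holds for every coordinate $j_0$ and every $\mathbf{i}_{-j_0}$.
\end{itemize}
Condition (a) also implies the positivity requirement of Definition \ref{trafod}, since $\tfrac1N>0$. In fact (a) follows from (b): summing eq.\ (\ref{eq:cond.tau2}) over the remaining free indices gives $N^{d-2}\cdot N^{-(d-1)}=\tfrac1N$. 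So $\mathcal{U}^N_d$ is exactly the set of probability vectors satisfying a family of linear equalities.

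\textbf{Assertion 1.} Given $\tau_1,\tau_2\in\mathcal{U}^N_d$ and $\alpha\in[0,1]$, the combination $\alpha\tau_1+(1-\alpha)\tau_2$ is again a probability vector. Both sides of (a) and (b) are affine in $\tau$ with constant right-hand sides, so these identities are preserved. Hence the convex combination lies in $\mathcal{U}^N_d$. The intervals $E^j_k$ do not change, because they are determined by (a).

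\textbf{Assertion 2.} By (i), every $E^j_{i_j}$ has length $\tfrac1N$. Hence
$$f_{\mathbf{i}}(\mathbf{x})=\tfrac1N\big((i_1-1,\ldots,i_d-1)+\mathbf{x}\big),$$
which is a translation composed with a homothety of ratio $\tfrac1N$. Therefore $\|f_\mathbf{i}(\mathbf{x})-f_\mathbf{i}(\mathbf{y})\|=\tfrac1N\|\mathbf{x}-\mathbf{y}\|$ for every norm, so each $f_\mathbf{i}$ is a similarity.

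\textbf{Assertion 3.} The map $\boldsymbol{\epsilon}=(\epsilon_1,\ldots,\epsilon_d)$ is a bijection of $\{1,\ldots,N\}^d$, so $\tau'=\tau\circ\boldsymbol{\epsilon}$ is again a probability vector. For (b) with coordinate $j_0$, the sum over $l$ of $\tau'$ equals a sum over $\epsilon_{j_0}(l)$, which runs through all of $\{1,\ldots,N\}$, with the other indices fixed at $\epsilon_j(i_j)$. By (b) for $\tau$ this sum is $N^{-(d-1)}$. Condition (a) then follows either directly by the same reindexing or from (b) as shown above.

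I expect no real obstacle here. The only point requiring care is noting that condition (i) is itself a linear constraint on $\tau$, and in fact a consequence of (b). This observation is what makes convexity and permutation invariance immediate.
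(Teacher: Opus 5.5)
Your proof is correct and follows essentially the same route as the paper: the similarity property comes from the explicit form of $f_\mathbf{i}$ under (i), and the permutation assertion comes from reindexing the sums via the bijections $\epsilon_j$. You make explicit that, under (i), the uniformity condition becomes the linear constraint eq.~(\ref{eq:cond.tau2}) with constant right-hand side, and that (i) itself follows from it; this justifies the convexity the paper simply calls obvious and makes the paper's separate reindexing check $\sum_{i_j=k}\tau'(\mathbf{i})=\tfrac1N$ unnecessary.
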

\begin{proof}
    Since convexity of $\mathcal{U}^N_d$ is obvious and the fact that each contraction 
    $f_\mathbf{i}$ is a similarity is a direct consequence of property (i) in 
    Definition \ref{def:U},     it suffices to prove the third assertion.\\
    Using bijectivity of the permutations $\epsilon_1,\ldots,\epsilon_d$, 
    the fact that $\tau'$ is a generalized transformation matrix is a direct consequence of 
    \begin{eqnarray*}
        \sum_{\mathbf{i}\in \mathcal{I}_d^\mathbf{m}: \,\,i_j=k}\tau'(\mathbf{i}) &=& 
        \sum_{\mathbf{i}\in \mathcal{I}_d^\mathbf{m}: \,\,i_j=k}
            \tau\left(\epsilon_1(i_1),\ldots,\epsilon_{j-1}(i_{j-1}),\epsilon_j(k),
            \epsilon_{j+1}(i_{j+1}),\ldots,\epsilon_d(i_d)\right) \\
         &=& \sum_{\mathbf{i}\in \mathcal{I}_d^\mathbf{m}: \,\,i_j=k}
            \tau\left(i_1,\ldots,i_{j-1},\epsilon_j(k),
            i_{j+1},\ldots,i_d\right) \\
         &=& a^j_{\epsilon_j(k)}- a^j_{\epsilon_j(k)-1}=\tfrac{1}{N} >0.
    \end{eqnarray*}
    The latter also directly yields property (i) in Definition \ref{def:U} for 
    $\tau'$. Moreover, for arbitrary but fixed 
    $i_1,i_2,\ldots,i_{j-1},i_{j+1},\ldots,i_{d-1} \in \{1,\ldots,N\}$, using
    $\tau \in \mathcal{U}^N_d$ and setting $s:=\sum_{l=1}^{N} \tau'((i_1,\ldots,i_{j_0-1},l,i_{j_0+1},\ldots,i_d)) $ we have
    \begin{eqnarray*}
   s&=& \sum_{l=1}^{N} \tau((\epsilon_1(i_1),\ldots,\epsilon_{j_0}(i_{j_0-1}),\epsilon_{j_0}(l),
    \epsilon_{j_0+1}(i_{j_0+1}),\ldots,\epsilon_d(i_d))) \\
    &=& \sum_{k=1}^{N} \tau((\epsilon_1(i_1),\ldots,\epsilon_{j_0}(i_{j_0-1}),k,
    \epsilon_{j_0+1}(i_{j_0+1}),\ldots,\epsilon_d(i_d))) = \tfrac{1}{N^{d-1}},
    \end{eqnarray*}
    i.e., $\tau'$ fulfills the uniformity condition w.r.t. every coordinate 
    $j_0 \in \{1,\ldots,d\}$.
\end{proof}


\begin{theorem}\label{thm:sierpiski.multi}
Let $d \geq 3$ as well as $N \geq 2$ be arbitrary but fixed,    
and suppose that the probability measure $\tau$ on $\{1,\ldots,N\}^d$ is given by   
\begin{equation}\label{eq:sierpsinski.multi}
\tau(\mathbf{i})=
\begin{cases}
\frac{1}{N^{d-1}} & \text{if } \sum_{j=1}^d (i_j-1) \, \in \{0,N,2N,\ldots,\}=:N\cdot \mathbb{N}_0\\
0 & \text{otherwise.}  
\end{cases}
\end{equation}
Then $\tau \in \mathcal{U}_d^N$ and the resulting measure $\mu_\tau^\star$ has the following properties:
\begin{itemize}
    \item[(P1)] $\mu_\tau^\star$ is an element of $\pc_d^{\lambda_{d-1}}$,
    \item[(P2)] the measure $\mu_\tau^\star$ is singular w.r.t. $\lambda_d$ 
    and has a self-similar set with Hausdorff dimension $d-1$ as support,     
    \item[(P3)] for $\mathbf{X}=(X_1,\ldots,X_d) \sim \mu_\tau^\star$ we have that 
each variable $X_j$ is almost surely a function of the other $(d-1)$ variables.  
\end{itemize}
\end{theorem}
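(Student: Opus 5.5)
First I will verify $\tau\in\mathcal{U}_d^N$. The key observation is that for fixed $\mathbf{i}_{-j_0}\in\{1,\ldots,N\}^{d-1}$ there is exactly one $l\in\{1,\ldots,N\}$ with $(l-1)+\sum_{j\neq j_0}(i_j-1)\equiv 0 \pmod N$. Hence the sum in (\ref{eq:cond.tau2}) equals $\frac{1}{N^{d-1}}$. Counting the support shows $|\mathcal{S}_\tau|=N^{d-1}$, so $\tau$ is a probability measure. Summing over $i_j=k$ gives $N^{d-2}\cdot\frac{1}{N^{d-1}}=\frac1N$, so $\tau$ is a generalized transformation matrix with $E^j_k=[\frac{k-1}{N},\frac kN]$. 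This establishes conditions (i) and (ii) of Definition \ref{def:U}.

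(P1) then follows directly from Theorem \ref{thm:main2}. For (P2), since $N\geq 2$ and $d\ge3$, some $\mathbf{i}$ has $\tau(\mathbf{i})=0$ (e.g.\ $\mathbf{i}=(2,1,\ldots,1)$), so Corollary \ref{thm:main3} gives singularity. By Lemma \ref{lem:U} the IFSP consists of $N^{d-1}$ similarities with ratio $1/N$ and satisfies the open set condition. Eq.\ (\ref{eq:calc_hausdorffdim}) then reads $N^{d-1}N^{-s}=1$, i.e.\ $s=d-1$, and self-similarity follows from the fixed-point property of the Hutchinson operator.

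(P3) is the main obstacle. By symmetry of $\tau$ under permutations of coordinates it suffices to treat $j=d$. My plan is to show that the support $Z^\star_\tau$ is, up to a $\lambda_{d-1}$-null set of $\mathbf{x}$, a graph over $[0,1]^{d-1}$. I would argue as follows. Write $x_j=\sum_{n\ge1}(i^{(n)}_j-1)N^{-n}$ in base $N$. A point of $Z^\star_\tau$ corresponds to a sequence of indices $\mathbf{i}^{(n)}\in\mathcal{S}_\tau$, so its coordinates are $x_j=\sum_n (i_j^{(n)}-1)N^{-n}$. For $\mathbf{x}_{-d}$ whose coordinates all have unique base-$N$ expansions (a set of full $\lambda_{d-1}$-measure), the digits $i^{(n)}_j$, $j<d$, are determined. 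The constraint then forces $i^{(n)}_d-1\equiv -\sum_{j<d}(i^{(n)}_j-1)\pmod N$, so $x_d=g(\mathbf{x}_{-d})$ is uniquely determined.

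One subtlety remains: non-unique expansions of $\mathbf{x}_{-d}$ could a priori give different points of $Z^\star_\tau$. These $\mathbf{x}$ form a countable union of hyperplanes and hence a $\lambda_{d-1}$-null set. Since $(\mu^\star_\tau)^{\pi_{-d}}=\lambda_{d-1}$ by (P1), the set of $\mathbf{x}$ whose section $(Z^\star_\tau)_{\mathbf{x}}$ has more than one point is $\mu^{\pi_{-d}}$-null. Therefore $K(\mathbf{x},\cdot)=\delta_{g(\mathbf{x})}$ is a version of the Markov kernel, with $g$ measurable as a limit of measurable digit maps. Using disintegration (\ref{eq:DI}), this yields $X_d=g(X_1,\ldots,X_{d-1})$ almost surely. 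The careful identification of points of $Z^\star_\tau$ with address sequences, including boundary effects at shared faces of the squares $R_\mathbf{i}$, is the step I expect to need the most care.
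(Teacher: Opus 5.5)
Your proof is correct. The verification of $\tau\in\mathcal{U}_d^N$ and of (P1) and (P2) matches the paper. You check uniformity in every coordinate directly rather than reducing to the last one by permutation invariance, which makes no difference.

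For (P3), however, you take a genuinely different route. The paper works with measures rather than points:
\begin{itemize}
\item It equips $\hat{\pc}_d$ with the metric $D_1$.
\item Via the isomorphism of $([0,1]^{d-1},\lambda_{d-1})$ and $([0,1],\lambda)$, it imports two external facts: completely dependent measures form a $D_1$-closed set, and $\mathcal{V}_\tau$ is a $D_1$-contraction.
\item It notes that $\mathcal{V}_\tau$ maps completely dependent measures to completely dependent ones, because each fiber $\{(i_1,\ldots,i_{d-1},l): l\leq N\}$ contains exactly one support point of $\tau$.
\item It iterates from some completely dependent $\mu_g$ and identifies the $D_1$-limit with $\mu^\star_\tau$ via weak convergence.
\end{itemize}

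You instead use the coding map of the IFS. Every point of $Z^\star_\tau$ has the form $\sum_{n}(\mathbf{i}^{(n)}-\mathbf{1})N^{-n}$ with $\mathbf{i}^{(n)}\in\mathcal{S}_\tau$. So for $\mathbf{x}$ outside the $\lambda_{d-1}$-null union of hyperplanes where some coordinate is an $N$-adic rational, the section $(Z^\star_\tau)_{\mathbf{x}}$ is the single point $g(\mathbf{x})$. Its $n$-th base-$N$ digit is $-\sum_{j<d}(i^{(n)}_j-1) \bmod N$. Together with (P1), this forces $\mu^\star_\tau$ to be the push-forward of $\lambda_{d-1}$ under $\mathbf{x}\mapsto(\mathbf{x},g(\mathbf{x}))$.

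The boundary issue you flag is fully absorbed by that null set. Only the uniqueness of the digits of $\mathbf{x}$ is used, not that of $x_d$, and the identification of the attractor with the image of the coding map is standard.

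What your route buys:
\begin{itemize}
\item It is elementary and self-contained.
\item It gives $g$ explicitly. For $d=3$, $N=2$ it is the digitwise XOR of $x_1$ and $x_2$, which explains the Sierpinski tetrahedron.
\item It proves a statement about the support itself, not only about where the mass sits.
\end{itemize}

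What the paper's route buys: it never inspects individual points of the attractor, and its only combinatorial input is the one-support-point-per-fiber property of $\tau$. It therefore carries over to Example \ref{ex:rotate} without expansion bookkeeping. The price is reliance on nontrivial external results about $D_1$ (completeness, closedness of the completely dependent measures, contractivity of $\mathcal{V}_\tau$), transferred from the bivariate setting. Your coding argument adapts to that example just as easily, with the digit of $x_d$ given by the unique admissible index.
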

\begin{proof}
We start with proving $\tau \in  \mathcal{U}_d^N$.  
First of all notice that $\tau$ according to eq. (\ref{eq:sierpsinski.multi}) obviously is 
permutation-invariant, i.e., for every permutation $\epsilon$ of $\{1,\ldots,d\}$ 
we have that $\tau(\epsilon(\mathbf{i}))=\tau(\mathbf{i})$ holds for all 
$\mathbf{i} \in \{1,\ldots,N\}^d$. As a direct consequence, in order to show that 
$\tau$ fulfills the uniformity condition w.r.t. every coordinate it suffices to
show uniformity w.r.t. the last coordinate, which can be done as follows:  
By construction, for arbitrary $(i_1,\ldots,i_{d-1}) \in \{1,\ldots,N\}^{d-1}$ there exist a unique 
index $l_0=l_0((i_1,\ldots,i_{d-1})) \in \{1,\ldots,N\}$ such that 
$\tau((i_1,\ldots,i_{d-1},l_0))>0$ holds, which 
directly yields
\begin{equation}
\sum_{l=1}^N \tau((i_1,\ldots,i_{d-1},l)) = \tfrac{1}{N^{d-1}}.
\end{equation}
As a direct consequence, for every fixed $i_1 \in \{1,\ldots,N\}$ we get
$$
\sum_{i_2,\ldots,i_{d-1},l=1}^N \tau((i_1,\ldots,i_{d-1},l)) = \tfrac{N^{d-2}}{N^{d-1}} 
= \tfrac{1}{N},
$$
which implies that the intervals $E^j_k=[a^j_{k-1}, a^j_k]$ do 
not depend on $j \in \{1,\ldots,d\}$ and fulfill 
$E^j_1=[0,\frac{1}{N}], E^j_2=[\frac{1}{N},\frac{2}{N}],\ldots,E^j_N=[\frac{N-1}{N},1]$. 
This shows that $\tau \in \mathcal{U}_d^N$ and applying Lemma \ref{lem:U} yields that
the induced IFSP according to eq. (\ref{ifsdefmulti}) consists of exactly $N^{d-1}$ 
similarities, each having contraction factor $\frac{1}{N}$. \\
Having this, applying Theorem \ref{thm:main2} yields property (P1),  
Corollary \ref{thm:main3} together with eq. (\ref{eq:calc_hausdorffdim}) property (P2), and 
it remains to prove (P3).   
Letting $\hat{\pc}_d$ denote the family of all $\mu \in \pc_d$ fulfilling 
$\mu^{\pi_{1:d-1}}=\lambda_{d-1}$ we 
obviously have $\pc_d^{\lambda_{d-1}} \subseteq \hat{\pc}_d$. 
According to \cite{GJT}, setting 
\begin{equation}
D_1(\mu,\nu)= \int_{[0,1]} \int_{[0,1]^{d-1}} \vert K_\mu(\mathbf{x},[0,y]) - K_\nu(\mathbf{x},[0,y]) \vert d\lambda_{d-1}(\mathbf{x})d\lambda(y)
\end{equation}
defines a metric on $\hat{\pc}_{d}$, the resulting metric space $(\hat{\pc}_{d},D_1)$ 
is complete and se\-pa\-rable, and convergence w.r.t. the metric $D_1$ implies uniform 
convergence of the corresponding copulas, which, in turn is equivalent to weak convergence of 
the $d$-stochastic measures (see, e.g., \cite{DS}). 
Using the fact that the probability spaces $([0,1]^{d-1},\mathcal{B}([0,1]^{d-1},\lambda_{d-1})$ and $([0,1],\mathcal{B}([0,1],\lambda)$ are isomorphic (see \cite{Royden,Walters}) 
and proceeding as in \cite{p6} yields the following:
firstly, the family of completely dependent measures in $\hat{\pc}_d$ is 
closed in $(\hat{\pc}_d,D_1)$; and, secondly, $\mathcal{V}_\tau$ is a contraction on 
the complete metric space $(\hat{\pc}_d,D_1)$. 
As a direct consequence, considering an arbitrary 
$\lambda_{d-1}$-$\lambda$-preserving transformation $g: [0,1]^{d-1} \rightarrow [0,1]$ 
and letting $\mu_g \in \hat{\pc}_d$ denote the corresponding 
completely dependent measure, it follows that $\mathcal{V}^n_\tau(\mu_g)$ is 
completely dependent too for every $n \in \mathbb{N}$ (since, as mentioned above, 
for arbitrary $(i_1,\ldots,i_{d-1}) \in \{1,\ldots,N\}^{d-1}$ there exists
exactly one $l_0 \in \{1,\ldots,N\}$ with $\tau((i_1,\ldots,i_{d-1},l_0))>0$). 
This altogether yields a sequence $(\mathcal{V}^n_\tau(\mu_g))_{n \in \mathbb{N}}$
of completely dependent measures in $\hat{\pc}_d$, which, using Banach's fixed point theorem
converges to some completely dependent measure $\nu \in \hat{\pc}_d$ w.r.t. $D_1$.
Since convergence w.r.t. $D_1$ implies weak convergence, $\nu=\mu_\tau^\star$ follows and 
we have shown that $\mu_\tau^\star$ is completely dependent, i.e., for 
$\mathbf{X}=(X_1,\ldots,X_d) \sim \mu_\tau^\star$ we have that 
$X_d$ is almost surely a function 
of $\mathbf{X}_{-d}$. Permuting the coordinates completes the proof.  
\end{proof}
\begin{example}\label{ex:sierpinski}
Consider $d=3,N=2$ and let $\sigma:=\tau \in \mathcal{U}_3^2$ be defined according to Theorem \ref{thm:sierpiski.multi}. Then the copula corresponding to 
$\mathcal{V}_\sigma(\lambda_3)$ coincides 
with the cube-copula $C^{\textrm{cube}}$ considered in \cite[Example 3.4. (3)]{MFT}. 
Figure \ref{fig2} depicts the supports of the 
proba\-bi\-lity measures $\mathcal{V}_\sigma^n(\lambda_3)$ for $n=1,2,3$ and $n=5$, Figure 
\ref{fig1} for $n=6$; notice that for every $n \in \mathbb{N}$ the probability measure 
$\mathcal{V}_\sigma^n(\lambda_3)$ coincides with the uniform distribution on $4^n$ cubes.
Looking at Figure \ref{fig2} it becomes clear that the support of $\mu_\sigma^\star$ 
coincides with a Sierpinski tetrahedron (a.k.a. tetrix, see \cite{Ts,W}). 
In other words, we have constructed a $3$-stochastic measure $\mu_\sigma^\star$ with the
following striking properties: the support of $\mu_\sigma^\star$ is a Sierpinski
tetrahedron, $\mu_\sigma^\star$ has uniform bivariate marginals, and $\mu_\sigma^\star$ is 
completely dependent in each direction. 
\end{example}

\begin{figure}[htbp]
    \centering
    \includegraphics[width=0.8\textwidth]{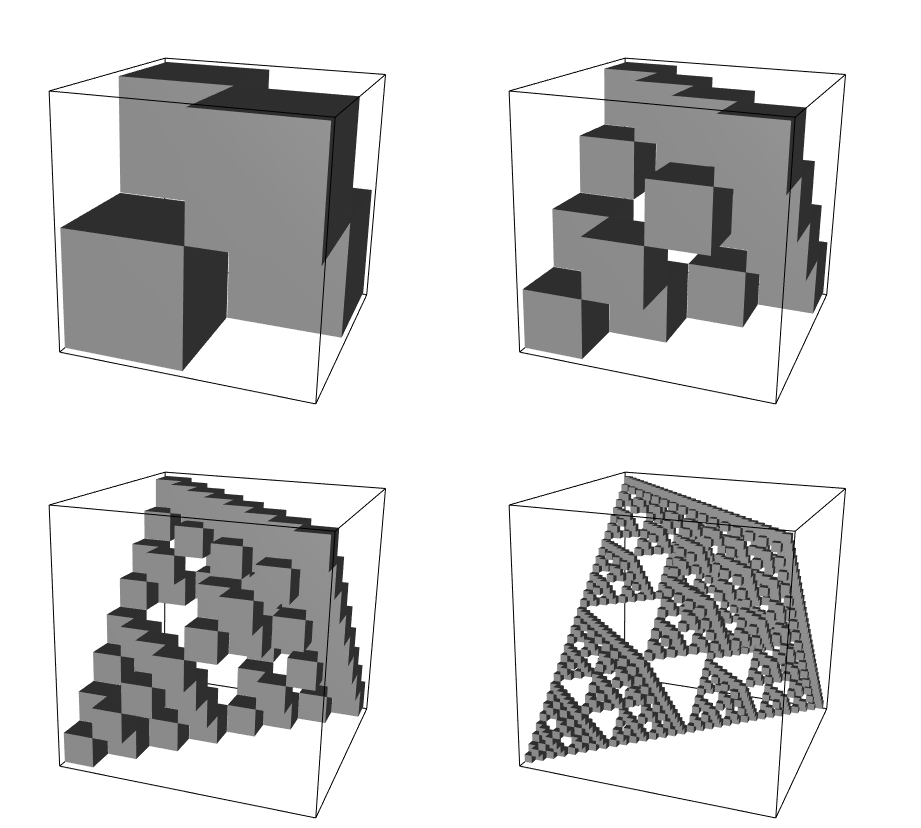}
    \caption{Support of $\mathcal{V}_\sigma^n(\lambda_3)$ as considered in Example 
    \ref{ex:sierpinski} for $n \in \{1,2,3,5\}$; Figure \ref{fig1} depicts the case $n=6$.}
    \label{fig2}
\end{figure}
We conclude this section with another three-dimensional example for 
the situation $m_1=m_2=m_3=N \geq 3$.
\begin{example}\label{ex:rotate}
Consider $d=3$ and $m_1=m_2=m_3=N \geq 3$, let 
$\epsilon$ denote a permutation of $\{1,\ldots, N\}$ such that every point $i \in \{1,\ldots,N\}$
has (minimal) period $N$, 
and define $\tau$ as the discrete uniform distribution on the finite set 
\begin{equation}\label{eq:rotation}
\{(i,\epsilon^{m}(i),m): \, i,m \in \{1,\ldots,N\}\} \subset \{1,\ldots,N\}^3.
\end{equation}
Then the intervals $E^j_k=[a^j_{k-1}, a^j_k]$ do not depend on $j \in \{1,2,3\}$ 
and we have $E^j_1=[0,\frac{1}{N}], E^j_2=[\frac{1}{N},\frac{2}{N}], \ldots,  
E^j_N=[\frac{N-1}{N},1]$. Moreover, for every pair $(i_1,i_2) \in \{1,\ldots,N\}^2$ there 
exists exactly one $j=j(i_1,i_2) \in \{1,\ldots,N\}$ with $\tau(i_1,i_2,j)>0$ so the construction 
of $\tau$ implies that the uniformity condition w.r.t. coordinate three holds.  
Since uniformity w.r.t. to the first and the second condition can be verified
analogously, $\tau \in \mathcal{U}_3^N$ follows. 
Proceeding as in the proof of Theorem \ref{thm:sierpiski.multi} we conclude that 
the measure $\mu_\tau^\star$ has self-similar support, that $\mu_\tau^\star \in \pc_3^{\lambda_2}$, 
and that for $(X_1,X_2,X_3) \sim \mu_\tau^\star$ we have that each variable is almost surely 
a function of the other two.  
\end{example}

\section{$d$-stochastic measures with uniform $(d-1)$-dimensional marginals and fractal support}
We start with the following simple example illustrating that for $d=3$ the class 
$\pc_d^{\lambda_{d-1}}$ contains elements whose support (is self-similar and) has
non-integer Hausdorff dimension. 
\begin{example}\label{ex:fractal}
Consider $d=N=3$ as well as the permutations $$\epsilon':=(231),\,\epsilon''=(312)$$ 
of $\{1,2,3\}$. Then for both permutations each point has period $3$, so Example \ref{ex:rotate}
implies that the two generalized transformation matrices $\tau',\tau''$ defined according
to eq. (\ref{eq:rotation}) fulfill $\tau',\tau'' \in \mathcal{U}_3^3$. 
Since $\mathcal{U}_3^3$ is convex it follows that $\tau=\frac{1}{2}(\tau'+\tau'')$ 
is an element of $\mathcal{U}_3^3$ too. It is straightforward to verify that $\tau$ assign mass 
$\frac{1}{18}$ to each of the points $(1,2,1), (2,3,1), (3,1,1), (1,3,1),(2,1,1),(3,2,1),
(1,3,2), (2,1,2), (3,2,2),(1,2,2)$ \newline and $(2,3,3),(3,1,2)$, and mass $\frac{1}{9}$
to each of $(1,1,3), (2,2,3), (3,3,3)$. According to Lemma \ref{lem:U}, the IFSP 
induced by $\tau$ only contains similarities, 
each with shrinking factor $\frac{1}{3}$, and 
the support $\textrm{supp}(\mu_\tau^\star)$ of $\mu_\tau^\star$ is a self-similar set 
whose Hausdorff dimension is the unique number $s$ fulfilling 
$15\,(\frac{1}{3})^s=1$, i.e., 
$$
\textrm{dim}_H(\mu_\tau^\star)= \tfrac{\log(15)}{\log(3)} \in (2,3).
$$
Applying Theorem \ref{thm:main2} shows $\mu_\tau^\star \in \pc_3^{\lambda_2}$, implying
that elements of $\pc_3^{\lambda_2}$ may have a support with non-integer Hausdorff dimension. 
Figure \ref{fig2} depicts the density of $\mathcal{V}_\tau^n(\lambda_3)$ for $n \in \{1,2,3\}$.
\end{example}
\begin{figure}[h!]
    \centering
    \includegraphics[width=0.7\textwidth]{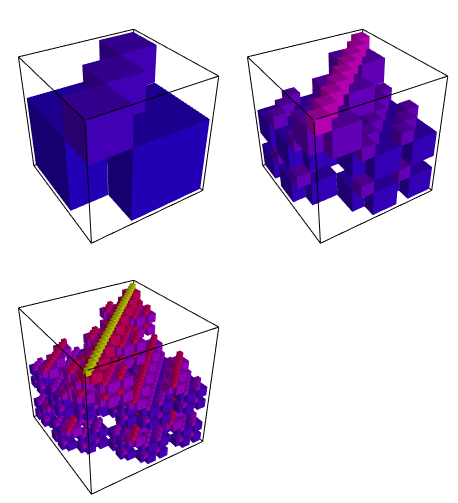}
    \caption{Density of $\mathcal{V}_\tau^n(\mu_{\lambda_3})$ for $n \in \{1,2,3\}$ with
    $\tau$ according to Example \ref{ex:fractal}; the color gradient going 
    from blue (low density) via red to yellow (high density).}
    \label{fig2}
\end{figure}
Using the idea of working with convex combinations of elements in $\mathcal{U}_d^N$ 
(as done in Example \ref{ex:fractal}) we 
now prove the first main result of this section: 
\begin{theorem}\label{thm:fractal.dim}
    For every $d \geq 3$ the set 
    \begin{equation}\label{eq:dense}
    \mathcal{D}_d:= \Big\{\mathrm{dim}_H(\mathrm{supp}(\mu)):\, \mu \in \pc_d^{\lambda_{d-1}}\Big\}    
    \end{equation}
   is dense in $[d-1,d]$.
\end{theorem}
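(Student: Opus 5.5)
The plan is to realize, for every $N\ge 2$ and every integer $k$ with $N^{d-1}\le k\le N^d$, a matrix $\tau\in\mathcal{U}_d^N$ whose support $\mathcal{S}_\tau$ has exactly $k$ elements. By Lemma \ref{lem:U} the induced IFSP then consists of $k$ similarities with common factor $\frac1N$. It also satisfies the open set condition, so by eq. (\ref{eq:calc_hausdorffdim}) the support of $\mu_\tau^\star$ has Hausdorff dimension $\log k/\log N$. By Theorem \ref{thm:main2} we have $\mu_\tau^\star\in\pc_d^{\lambda_{d-1}}$, hence $\log k/\log N\in\mathcal{D}_d$. Density then follows from a counting argument. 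For fixed $s\in(d-1,d)$ and $N$ large, the numbers $\log k/\log N$ with $N^{d-1}\le k\le N^d$ have consecutive gaps $\log(1+1/k)/\log N\le 1/(k\log N)\to0$, and they cover $[d-1,d]$. Hence every point of $[d-1,d]$ is approximated arbitrarily well.

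The realization of support size $k$ uses convex combinations, as in Example \ref{ex:fractal}. The basic building blocks are the ``Latin hypercube'' matrices $\tau_{\mathbf{a}}$, namely the uniform distribution (mass $N^{-(d-1)}$) on the set $\{\mathbf{i}: \sum_j (i_j-1)\equiv a \pmod N\}$ for $a\in\{0,\ldots,N-1\}$. These are obtained from the $\tau$ of Theorem \ref{thm:sierpiski.multi} by the permutation of the last coordinate $l\mapsto l+a \bmod N$, so they belong to $\mathcal{U}_d^N$ by Lemma \ref{lem:U}(3). Their supports are pairwise disjoint, each has size $N^{d-1}$, and together they partition $\{1,\ldots,N\}^d$. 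By convexity (Lemma \ref{lem:U}(1)), $\frac1r\sum_{a<r}\tau_a\in\mathcal{U}_d^N$ has support size $rN^{d-1}$. This only yields $k\in\{N^{d-1},2N^{d-1},\ldots,N^d\}$, with dimensions $d-1+\log r/\log N$. Still, for fixed $N$ these take $N$ values in $[d-1,d]$ with maximal gap $\log 2/\log N$ (attained at $r=1\to2$). Letting $N\to\infty$ already gives density of $\{d-1+\log r/\log N: 1\le r\le N,\ N\ge2\}$ in $[d-1,d]$. For fixed $s$, pick $r=\lceil N^{s-d+1}\rceil$, so that $|\log r/\log N-(s-d+1)|\le \log 2/\log N$ once $N^{s-d+1}\ge1$.

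So the cleanest route avoids fine-tuning $k$ altogether. For each $N\ge2$ and $1\le r\le N$, I would set $\tau_{N,r}:=\frac1r\sum_{a=0}^{r-1}\tau_a\in\mathcal{U}_d^N$. The uniformity condition is preserved under convex combinations, and every $E^j_k$ equals $[\frac{k-1}N,\frac kN]$. I would verify that $|\mathcal{S}_{\tau_{N,r}}|=rN^{d-1}$ using the disjointness of the residue classes. I would then apply Lemma \ref{lem:U}(2), the open set condition and eq. (\ref{eq:calc_hausdorffdim}) to get $\dim_H(\mathrm{supp}\,\mu^\star_{\tau_{N,r}})=d-1+\frac{\log r}{\log N}$. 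Finally, Theorem \ref{thm:main2} gives membership in $\pc_d^{\lambda_{d-1}}$, and the elementary approximation above finishes the proof.

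The main point needing care is that the support of $\mu_\tau^\star$ is exactly the attractor of the IFS restricted to $\mathcal{S}_\tau$, so that the dimension formula applies with exactly $|\mathcal{S}_\tau|$ maps. This holds since the IFSP in eq. (\ref{ifsdefmulti}) only uses $\mathbf{i}\in\mathcal{S}_\tau$, and the open set condition holds with the open cube $(0,1)^d$ because the cubes $R_\mathbf{i}$ have disjoint interiors. Everything else is routine.
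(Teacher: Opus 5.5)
Your argument is correct and is essentially the paper's proof. Both take convex combinations of coordinate-permuted copies of the matrix from Theorem \ref{thm:sierpiski.multi} and then apply Lemma \ref{lem:U}, Theorem \ref{thm:main2} and eq.~(\ref{eq:calc_hausdorffdim}). The only difference is your restriction to cyclic shifts of the last coordinate, whose supports are disjoint residue classes. This gives exact support sizes $rN^{d-1}$, hence exact dimensions $d-1+\log r/\log N$, whereas the paper averages over an enumeration of all $(N!)^d$ permutation tuples and only gets the bracketing $\#\mathcal{S}_{\sigma_{n_k}}\in[(k-1)N^{d-1},kN^{d-1}]$.

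You were right to abandon your opening plan of realizing every support size $k\in[N^{d-1},N^d]$, because it fails in general. For $d=3$ and $N=2$, every element of $\mathcal{U}_3^2$ has support of size $4$ or $8$.
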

\begin{proof}
Let $d \geq 3, N \geq 2$ be arbitrary but fixed and define     
$\tau$ as in Theorem \ref{thm:sierpiski.multi}, i.e.,    
\begin{equation*}
\tau(\mathbf{i})=
\begin{cases}
\frac{1}{N^{d-1}} & \text{if } \sum_{j=1}^d (i_j-1) \, \in \{0,N,2N,\ldots\}=:N\cdot \mathbb{N}_0\\
0 & \text{otherwise.}  
\end{cases}
\end{equation*}
Then we have $\tau \in \mathcal{U}_d^N$, the support of $\tau$ has cardinality $N^{d-1}$, and 
the induced IFSP consists of exactly $N^{d-1}$ similarities, each having shrinking factor 
$\frac{1}{N}$. Loosely speaking, our idea of proof
is to consider all possible permutations of $\tau$ and to work with convex combinations 
incorporating a growing number of elements in $\mathcal{U}_d^N$ which yields a growing number 
of similarities in the corresponding IFSP.  \\
To simplify notation we will let $\Sigma_N$ denote the family of all $N!$ permutations of 
the set $\{1,\ldots,N\}$. The set $(\Sigma_N)^d$ has cardinality $(N!)^d$, we will write 
elements of $(\Sigma_N)^d$ in the form $\mathbf{e} =(\epsilon_1,\ldots,\epsilon_d)$ and 
let $\mathbf{e}_1,\mathbf{e}_2,\ldots,\mathbf{e}_{(N!)^d}$ denote an arbitrary but fixed 
enumeration of $(\Sigma_N)^d$ fulfilling that $\mathbf{e}_1$ corresponds to the identity 
on $\{1,\ldots,N\}^d$. 
For every $\mathbf{e} =(\epsilon_1,\ldots,\epsilon_d) \in (\Sigma_N)^d$, setting 
$$
  \tau^\mathbf{e}((i_1,\ldots,i_d)):=\tau(\epsilon_1(i_1),\ldots,\epsilon_d(i_d)), 
$$
according to Lemma \ref{lem:U} we have $\tau^\mathbf{e} \in \mathcal{U}_d^N$, the cardinality
of the support $\mathcal{S}_{\tau^\mathbf{e}}$ of $\tau^\mathbf{e}$ fulfills 
$$
\#\mathcal{S}_{\tau^\mathbf{e}} = \# \mathcal{S}_{\tau}=N^{d-1}, 
$$
and the IFSP induced by $\tau^\mathbf{e}$ consists of $N^{d-1}$ similarities.  
For every $n \in \{1,\ldots,(N!)^d\}$ define $\sigma_n \in \mathcal{U}_d^N$ by 
\begin{equation}
    \sigma_n:= \tfrac{1}{n} \sum_{k=1}^n \tau^{\mathbf{e}_k}.
\end{equation}
Then we obviously have 
$$
N^{d-1} = \#\mathcal{S}_{\sigma_1} \leq \# \mathcal{S}_{\sigma_2} \leq \#\mathcal{S}_{\sigma_3}
\leq \cdots \leq \#\mathcal{S}_{\sigma_{(N!)^d}} = N^d
$$
and 
$$
\#\mathcal{S}_{\sigma_n} - \# \mathcal{S}_{\sigma_{n-1}} \leq N^{d-1} 
$$
holds for every $n \in \{2,\ldots,(N!)^d\}$. As a direct consequence, for every $k \in \{2,\ldots,N\}$ we can find some $n_k \in \{1,\ldots,(N!)^d\}$
fulfilling $$\# \mathcal{S}_{\sigma_{n_k}} \in [(k-1)N^{d-1},kN^{d-1}].$$
This shows that for the measure $\mu_{\sigma_{n_k}}^\star \in \pc_d^{\lambda_{d-1}}$ we have 
\begin{equation}\label{eq:final.lower.upper}
\tfrac{\log(k-1)}{\log(N)} + d-1 \leq \tfrac{\log(\#\mathcal{S}_{\sigma_{n_k}})}{\log(N)} = 
\textrm{dim}_H(\textrm{supp}(\mu_{\sigma_{n_k}^\star})) \leq 
\tfrac{\log(k)}{\log(N)} + d-1.
\end{equation}
Considering that $N \geq 2$ was arbitrary, that the set 
$$
\bigcup_{N=2}^\infty \Big\{\tfrac{\log(k)}{\log(N)}:\, k \in \{2,\ldots,N\}\Big\}
$$
is dense in $[0,1]$, and that each measure $\mu_{\sigma_n}^\star$ is an element 
of $\pc_d^{\lambda_{d-1}}$ completes the proof. 
\end{proof}
We conjecture that for every $d \geq 3$ even $\mathcal{D}_d=[d-1,d]$ holds,  
but we have not been able to prove this slightly stronger result. \\

After constructing various elements of $\pc_d^{\lambda_{d-1}}$ with strikingly pathological 
mass distributions, we conclude the paper by showing that such wild animals can be 
found `topologically everywhere'. 

\begin{theorem}\label{thm:fract.dense}
    For every $d\geq 3$ the family $\mathcal{F}_d^{\lambda_{d-1}}$ of elements in 
    $\pc_d^{\lambda_{d-1}}$ whose support has non-integer Hausdorff dimension is 
    dense in the metric space $(\pc_d^{\lambda_{d-1}},h)$.
\end{theorem}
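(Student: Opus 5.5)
Given $\mu \in \pc_d^{\lambda_{d-1}}$ and $\varepsilon>0$, the plan is to build a measure with fractal support from a fixed element with fractal support by a ``patchwork'' construction on a fine grid, so that the result is close to $\mu$ in $h$. Fix $M$ large. On the grid of cubes $R_\mathbf{i}$, $\mathbf{i}\in\{1,\ldots,M\}^d$, of side $\frac1M$, set $w_\mathbf{i}:=\mu(R_\mathbf{i})$ (boundary mass can be assigned consistently; since all $(d-1)$-marginals of $\mu$ are $\lambda_{d-1}$, hyperplanes $\{x_j=c\}$ are $\mu$-null, so the $R_\mathbf{i}$ overlap only on null sets). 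Because the $(d-1)$-marginals are uniform, $\sum_{l} w_{(i_1,\ldots,l,\ldots,i_d)} = M^{-(d-1)}$ for every $j_0$ and every $\mathbf{i}_{-j_0}$. Hence $w$ is a generalized transformation matrix in $\mathcal{U}_d^M$.

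Next, fix some $\nu\in\pc_d^{\lambda_{d-1}}$ with non-integer support dimension $s\in(d-1,d)$, e.g.\ $\nu=\mu^\star_{\sigma}$ from Theorem \ref{thm:fractal.dim} (or Example \ref{ex:fractal} when $d=3$). Define
\[
\tilde\mu:=\sum_{\mathbf{i}} w_\mathbf{i}\, \nu^{f_\mathbf{i}},
\]
i.e.\ one application of the Markov operator $\mathcal{V}_w$ to $\nu$. By Theorem \ref{thm:main1} (applied for each coordinate, using that $w$ satisfies the uniformity condition w.r.t.\ every coordinate and $\nu$ has uniform $(d-1)$-marginals), $\tilde\mu\in\pc_d^{\lambda_{d-1}}$. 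Its support is $\bigcup_{\mathbf{i}\in\mathcal{S}_w} f_\mathbf{i}(\mathrm{supp}(\nu))$, a finite union of similar copies of $\mathrm{supp}(\nu)$. Hausdorff dimension is invariant under similarities and finitely stable, so $\mathrm{dim}_H(\mathrm{supp}(\tilde\mu))=s\notin\mathbb{Z}$. Thus $\tilde\mu\in\mathcal{F}_d^{\lambda_{d-1}}$.

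For the distance, both $\mu$ and $\tilde\mu$ give mass $w_\mathbf{i}$ to each $R_\mathbf{i}$. Take the coupling that transports mass inside each cube: for $g\in Lip_1$,
\[
\Big|\int g\,d\mu-\int g\,d\tilde\mu\Big|\le \sum_\mathbf{i} w_\mathbf{i}\,\mathrm{diam}(R_\mathbf{i}) = \tfrac{\sqrt d}{M}.
\]
Choosing $M>\sqrt d/\varepsilon$ then gives $h(\mu,\tilde\mu)<\varepsilon$.

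The main obstacle is the bookkeeping behind the first step. One needs that $w$ really lies in $\mathcal{U}_d^M$ (the $E^j_k$ equal $[\frac{k-1}M,\frac kM]$), which follows from the $(d-1)$-marginal identity together with the null boundaries. One also needs that zero entries of $w$ are simply dropped from the IFS without affecting Theorem \ref{thm:main1}. Finally, the support identity uses compactness of $\mathrm{supp}(\nu)$ and that each $f_\mathbf{i}$ is a homeomorphism onto $R_\mathbf{i}$. Note that $\mathcal{S}_w$ is nonempty, so the dimension count applies.
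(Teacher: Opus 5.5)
Your proof is correct, and its construction is the paper's own. Your $w$ is the paper's checkerboard matrix $\tau^N$ (with $N=M$), and your $\tilde\mu=\mathcal{V}_w(\nu)$ is exactly the measure $\mathcal{V}_{\tau^N}(\nu)$ used there. Membership in $\mathcal{P}_d^{\lambda_{d-1}}$ and the dimension count (similarity invariance plus finite stability of $\mathrm{dim}_H$) are justified the same way.

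Where you genuinely differ is the approximation step. The paper never estimates $h(\mu,\mathcal{V}_{\tau^N}(\nu))$ directly. It works with copulas in the sup-norm and goes through the intermediate checkerboard measure $\mathcal{V}_{\tau^N}(\lambda_d)$:
\begin{itemize}
\item it imports the known uniform convergence of checkerboard copulas to the copula of $\mu$;
\item it shows that the copulas of $\mathcal{V}_{\tau^N}(\nu)$ and $\mathcal{V}_{\tau^N}(\lambda_d)$ coincide on the grid $\{0,\frac1N,\ldots,1\}^d$, hence are uniformly close by Lipschitz continuity of copulas;
\item it converts uniform convergence of copulas into $h$-convergence via the equivalence with weak convergence.
\end{itemize}
You instead observe that $\mu$ and $\tilde\mu$ give the same mass $w_\mathbf{i}$ to every grid cube, since neither charges a hyperplane $\{x_j=c\}$. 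You then bound the Kantorovich--Rubinstein dual cube by cube, obtaining $h(\mu,\tilde\mu)\le\sqrt d/M$.

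Your route is shorter and self-contained. It needs no external checkerboard result, no copula/weak-convergence equivalence, and no intermediate measure. With $\nu=\lambda_d$ it even reproves the weak convergence of checkerboard approximations that the paper cites. It is also quantitative, with a bound uniform in both $\mu$ and $\nu$.

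The paper's route gives, as a by-product, closeness in the sup-metric of copulas, which is the more customary metric in copula theory. However, it obtains this only through a qualitative limit.

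Both arguments yield the strengthening in the paper's closing remark equally well: choose $\nu$ with $\mathrm{dim}_H(\mathrm{supp}(\nu))$ in a prescribed open subinterval of $(d-1,d)$.
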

\begin{proof}
  Let $\mu \in \pc_d^{\lambda_{d-1}}$ be arbitrary
  but fixed. It suffices to show that arbitrary close to $\mu$ there exists some 
  element of $\mathcal{F}_d^{\lambda_{d-1}}$. 
  Doing so we work with so-called checkerboard approximations as studied in \cite{Mi} and, contrary
  to before, now construct generalized transformation matrices $\tau$ from 
  elements $\mu$ in $\pc_d^{\lambda_{d-1}}$. 
  For every $N \geq 2$ and every 
  $\mathbf{i}=(i_1,\ldots,i_d) \in \{1,\ldots,N\}^d$ define the cube $C_\mathbf{i}^N$ by 
  $$
  C_\mathbf{i}^N=\bigtimes_{j=1}^d \Big[\tfrac{i_j-1}{N},\tfrac{i_j}{N}\Big],
  $$
  and set $\tau^N(\mathbf{i}):=\mu(C_\mathbf{i}^N)$. Considering that 
  $\mu$ is an element of $\pc_d^{\lambda_{d-1}}$ it follows that 
  $\tau$ is a generalized transformation matrix and that $\tau^N \in \mathcal{U}_d^N$ holds. 
  Notice that the measure 
  $\mathcal{V}_{\tau^N}(\lambda_d)$ coincides with the $N\times N \times\ldots \times N$ 
  checkerboard approximation of the measure $\mu$, which (again see \cite{Mi}) converges 
  to $\mu$ weakly for $N \rightarrow \infty$, i.e., 
  $\lim_{N \rightarrow \infty} h(\mathcal{V}_{\tau^N}(\lambda_d),\mu)=0$ holds. 
  Since weak convergence in $\mathcal{P}_d$ is equivalent to uniform convergence 
  of the corresponding copulas (see \cite{DS}), the latter implies 
  \begin{equation}\label{eq:final1}
      \lim_{N \rightarrow \infty} \underbrace{\sup_{\mathbf{x} \in [0,1]^d} 
  \Bigg\vert \mathcal{V}_{\tau^N}(\lambda_d)\left(\bigtimes_{i=1}^d [0,x_i]\right) -
  \mu\left(\bigtimes_{i=1}^d [0,x_i]\right) \Bigg\vert}_{=:\delta_N} =0
  \end{equation}
   Let $\nu$ denote any element of $\pc_d^{\lambda_{d-1}}$ with 
  $\textrm{dim}_H(\textrm{supp}(\nu)) \in (d-1,d)$. Then according to 
  Theorem \ref{thm:main2} the measure $\mathcal{V}_{\tau^N}(\nu)$ fulfills 
  $\mathcal{V}_{\tau^N}(\nu) \in \pc_d^{\lambda_{d-1}}$
  hence, using the fact that bi-Lipschitz transformations (like similarities) 
  preserve the Hausdorff dimension, in combination with countable 
  stability of the Hausdorff dimension (see \cite{Fal}) yields
  \begin{equation}
      \textrm{dim}_H(\textrm{supp}(\mathcal{V}_{\tau^N}(\nu))) = 
      \textrm{dim}_H(\textrm{supp}(\nu))\in (d-1,d).
  \end{equation}
  As final step we show that for every $\varepsilon>0$ there exists some $N_1 \in \mathbb{N}$
  such that for all $N \geq N_1$ 
  \begin{equation}\label{ineq:final}
      \sup_{\mathbf{x} \in [0,1]^d} 
  \Bigg\vert \mathcal{V}_{\tau^N}(\nu)\left(\bigtimes_{i=1}^d [0,x_i]\right) -
  \mu\left(\bigtimes_{i=1}^d [0,x_i]\right) \Bigg\vert < \varepsilon.
  \end{equation}
  According to eq. (\ref{eq:final1}) there exists some $N_0 \in \mathbb{N}$
  such that $\delta_N < \frac{\varepsilon}{2}$ for all $N \geq N_0$. 
  Choose $N_1 \geq N_0$ such that $\frac{d}{2N_1} < \frac{\varepsilon}{4}$ and consider 
  some $N \geq N_1$. It is straightforward to see that for every $\mathbf{x} \in [0,1]^d$
  there exists some 
  $\mathbf{z}^{\mathbf{x}} \in \mathcal{G}:=\{0,\frac{1}{N},\ldots,\frac{N-1}{N},1\}^d$ with 
  $\sum_{i=1}^d \vert x_i-z^\mathbf{x}_i \vert \leq \frac{d}{2N} < \frac{\varepsilon}{4}$.
  Hence, considering  
  $\mathcal{V}_{\tau^N}(\nu)(C_\mathbf{i}^N)=\mathcal{V}_{\tau^N}(\lambda_d)(C_\mathbf{i}^N)$
  for every $\mathbf{i} \in \{1,\ldots,N\}^d$, using Lipschitz continuity of copulas (see \cite{DS}) and setting 
  \begin{eqnarray*}
    \delta_N'(\mathbf{x})&:=&
  \Bigg\vert \mathcal{V}_{\tau^N}(\nu)\left(\bigtimes_{i=1}^d [0,x_i]\right) -
  \mathcal{V}_{\tau^N}(\lambda_d)\left(\bigtimes_{i=1}^d [0,x_i]\right) \Bigg\vert  
  \end{eqnarray*}
  using the triangle inequality we have 
  \begin{eqnarray*}
    \delta_N'(\mathbf{x}) &\leq& 
    \Bigg\vert \mathcal{V}_{\tau^N}(\nu)\left(\bigtimes_{i=1}^d [0,x_i]\right) -
  \mathcal{V}_{\tau^N}(\nu)\left(\bigtimes_{i=1}^d [0,z_i^\mathbf{x}]\right)  \Bigg\vert  
   \,+\, \delta_N'(\mathbf{z}^\mathbf{x})\\
  & & + \,\Bigg\vert \mathcal{V}_{\tau^N}(\lambda_d)\left(\bigtimes_{i=1}^d [0,z_i^\mathbf{x}]\right) -
  \mathcal{V}_{\tau^N}(\lambda_d)\left(\bigtimes_{i=1}^d [0,x_i]\right)  \Bigg\vert \\
  &\leq& \tfrac{\varepsilon}{4} + 0 + \tfrac{\varepsilon}{4} = \tfrac{\varepsilon}{2}.
  \end{eqnarray*}
  Applying the triangle inequality once more it follows that 
  for every $N \geq N_1$ we have 
  $\delta_N + \sup_{\mathbf{x} \in [0,1]^d} \delta_N'(\mathbf{x}) \leq \varepsilon$, which 
  shows ineq. (\ref{ineq:final}) and completes the proof. 
\end{proof}

\begin{remark}
Notice that in the proof of Theorem \ref{thm:fract.dense} we have shown a slightly stronger
result: for every $d\geq 3$ and every open, non-empty interval $I \subset (d-1,d)$ 
the family of all elements in $\pc_d^{\lambda_{d-1}}$ whose support has a Hausdorff dimension in the interval $I$ is dense in the metric space $(\pc_d^{\lambda_{d-1}},h)$. 
\end{remark}


\noindent \textbf{Acknowledgement}
The first and the third author gratefully acknowledge the support of the WISS 2025 project 
‘IDA-lab Salzburg’ (20204-WISS/225/197-2019542 and 20102-F1901166-KZP)











\end{document}